\newtheorem{Theorem}{Theorem}[section]
\newtheorem{Lemma}[Theorem]{Lemma}
\newtheorem{Corollary}[Theorem]{Corollary}
\newtheorem{Proposition}[Theorem]{Proposition}
\newtheorem{Definition}[Theorem]{Definition}
\newcommand{\Cset}{\mathbb{C}}
\newcommand{\Nset}{\mathbb{N}}
\newcommand{\Zset}{\mathbb{Z}} 
\newcommand{\qA}{\ensuremath{{\mathcal A}}\xspace}
\newcommand{\qB}{\ensuremath{{\mathcal B}}\xspace}
\newcommand{\qC}{\ensuremath{{\mathcal C}}\xspace}
\newcommand{\qD}{\ensuremath{{\mathcal D}}\xspace}
\newcommand{\qE}{\ensuremath{{\mathcal E}}\xspace}
\newcommand{\qF}{\ensuremath{{\mathcal F}}\xspace}
\newcommand{\qG}{\ensuremath{{\mathcal G}}\xspace}
\newcommand{\qH}{\ensuremath{{\mathcal H}}\xspace}
\newcommand{\qK}{\ensuremath{{\mathcal K}}\xspace}
\newcommand{\qL}{\ensuremath{{\mathcal L}}\xspace}
\newcommand{\qO}{\ensuremath{{\mathcal O}}\xspace}
\newcommand{\qP}{\ensuremath{{\mathcal P}}\xspace}
\newcommand{\qR}{\ensuremath{{\mathcal R}}\xspace}
\newcommand{\qS}{\ensuremath{{\mathcal S}}\xspace}
\newcommand{\qT}{\ensuremath{{\mathcal T}}\xspace}
\newcommand{\qU}{\ensuremath{{\mathcal U}}\xspace}
\newcommand{\qX}{\ensuremath{{\mathcal X}}\xspace}
\newcommand{\qY}{\ensuremath{{\mathcal Y}}\xspace}
\newcommand{\fg}{\ensuremath{\mathfrak{g}}\xspace} %
\newcommand{\ff}{\ensuremath{\mathfrak{f}}\xspace} %
\newcommand{\fh}{\ensuremath{\mathfrak{h}}\xspace} %
\newcommand{\fr}{\ensuremath{\mathfrak{r}}\xspace} %
\newcommand{\fs}{\ensuremath{\mathfrak{s}}\xspace} %
\newcommand{\fk}{\ensuremath{\mathfrak{k}}\xspace} %
\newcommand{\eins}{\ensuremath{{\rm 1\kern-.25em l}}\xspace}  
\newcommand{\id}{\operatorname{id}}                        
\begin{document}

\title{Weak Markov Processes as Linear Systems}

\author{Rolf Gohm \\        
Department of Mathematics \\
IMPACS, Aberystwyth University \\
email: rog@aber.ac.uk}           

\maketitle

\begin{abstract}

A noncommutative Fornasini-Marchesini system (a multi-variable version of a linear system) can be realized within a weak Markov process (a model for quantum evolution). For a discrete time parameter the resulting structure is worked out systematically and some quantum mechanical interpretations are given. We introduce subprocesses and quotient processes and then the notion of a $\gamma$-extension for processes which leads to a complete classification of all the ways in which processes can be built from subprocesses and quotient processes. We show that within a
$\gamma$-extension we have a cascade of noncommutative Fornasini-Marchesini systems. We study observability in this setting and as an application we gain new insights into stationary Markov chains where observability for the system is closely related to asymptotic completeness in a scattering theory for the chain. 
\end{abstract}

Keywords: noncommutative Fornasini-Marchesini system, weak Markov process, subprocess, quotient process, cascade, observability, asymptotic completeness

MSC: 46L53, 47A20, 93B07

\section{Introduction}
\label{section:intro}

Recently there has been much attention for a certain multi-variable version of linear system theory which presents evolution equations of the form
\begin{eqnarray*}
x(\alpha k) &=& A_k\, x(\alpha) + B_k\, u(\alpha), \\
y(\alpha) &=& C\, x(\alpha) + D\, u(\alpha)\, .
\end{eqnarray*}
This is called a noncommutative Fornasini-Marchesini system in \cite{BGM05,BGM06,BV05}.
Here $\alpha \in F^+_d$ where $F^+_d$ is the free semigroup with $d$ generators (which we denote $1,\ldots,d$ and $d\in \Nset$ or $d=\infty$). The elements in $F^+_d$ are words in the letters $1,\ldots,d$, including the empty word $0$. Composition is defined by concatenation of words, for example $\alpha k$ in the formula above is the concatenation of a word
$\alpha$ and a generator $k \in \{1,\ldots,d\}$. Now $u, x, y$ are functions on $F^+_d$ taking values in vector spaces $\qU, \qX, \qY$ and $A_k: \qX \rightarrow \qX,\, B_k: \qU \rightarrow \qX,\, C: \qX \rightarrow \qY,\, D: \qU \rightarrow \qY$ are linear operators. For $d=1$
we have $F^+_d \simeq \Nset_0$ and this is the classical setting of linear system theory (state space models). The functions $u, x, y$ are interpreted as input, internal state, output of the system.

\setlength{\unitlength}{0.9cm}
\begin{picture}(15,3.5)

\put(1.0,1){\line(0,1){2}}
\put(1.0,1){\line(1,0){2.9}}
\put(3.9,1){\line(0,1){2}}
\put(1.0,3){\line(1,0){2.9}}

\put(5.4,1){\line(0,1){2}}
\put(5.4,1){\line(1,0){3}}
\put(8.4,1){\line(0,1){2}}
\put(5.4,3){\line(1,0){3}}

\put(9.9,1){\line(0,1){2}}
\put(9.9,1){\line(1,0){2.8}}
\put(12.7,1){\line(0,1){2}}
\put(9.9,3){\line(1,0){2.8}}

\put(5.4,1.7){\vector(-1,0){1.5}}
\put(7.4,1.7){\vector(-1,0){1.0}}
\put(9.9,1.7){\vector(-1,0){1.5}}

\put(1.0,2.0){output space $\qY$}
\put(10.0,2.0){input space $\qU$}
\put(5.4,2.0){internal space $\!\qX$}
\put(4.5,1.2){$C$}
\put(6.7,1.2){$A_k$}
\put(9.0,1.2){$B_k$}

\put(2.5,0.5){\line(1,0){8.8}}
\put(2.5,0.5){\vector(0,1){0.5}}
\put(11.3,0.5){\line(0,1){0.5}}
\put(6.8,0.1){$D$}

\end{picture}

It has been established that many important mathematical concepts and results in linear system theory generalize nicely for all $d$, see \cite{BBF11}. We mention two such concepts which will later be studied in this paper. The first is the {\it observability map}
\[
\qO_{C,A} := (C \, A^\alpha)_{\alpha \in F^+_d}
\]
which is a linear map from $\qX$ into the $\qY$-valued functions on $F^+_d$.
(In this paper we use the convention that for
any variables $X_1, \ldots ,X_d$ and a word $\alpha = \alpha_1 \ldots \alpha_n \in F^+_d$ we have
$X_\alpha := X_{\alpha_1} \ldots X_{\alpha_n}$,
\; $X^\alpha := X_{\alpha_n} \ldots X_{\alpha_1}$, if $\alpha = 0$ we interpret it as an identity.)
By studying the observability map we can find out what we are able to know about the internal space $\qX$
by observing the output.

The second concept we want to mention here is the {\it transfer function} which is a description of how inputs are transferred into outputs.  
In the multi-variable context above the transfer function can be defined as a formal
power series 
\[
\qT(z) :=
\qT(z_1,\ldots,z_d) 
:= \sum_{\alpha \in F^+_d} \qT^\alpha z^\alpha 
:= D + C \sum^\infty_{r=1} (zA)^{r-1} zB
= D + C (I_\qX - zA)^{-1} zB
\]
where $z = (z_1\,I_\qX, \ldots ,z_d\,I_\qX)$
with indeterminates
$z_1,\ldots,z_d$ freely noncommuting among each other but commuting with the linear maps, $A = (A_1, \ldots, A_d)^t,$ 
$\; B = (B_1, \ldots, B_d)^t$, and $t$ denoting a transpose of the row vectors so $A$ and $B$ are column vectors of linear maps. In the classical case $d=1$ we call the single variable $z$ and then this reduces to a familiar formula which gives the transfer function as an analytic function. Explicitly
\[
\qT^\alpha = 
\left\{
\begin{array}{cc}
D & \text{if}\; \alpha = 0 \\ 
C\, B_\alpha & \text{if}\; |\alpha| = 1 \\
C\, A_{\alpha_r} \ldots A_{\alpha_2}\,B_{\alpha_1} & \text{if}\;  \alpha = \alpha_1 \ldots \alpha_r,\, r = |\alpha| \ge 2
\end{array} 
\right.
\]
We come back to this at the end of Section 3.
See also \cite{BBF11,Go12}
for further discussions of similar formulas.   

Mathematical system theory is an abstraction from the physical dynamics. But of course its relevance to the real world depends on the fact that such a physical dynamics exists in the background. The basic idea behind our work comes from the observation that a non-commutative multi-variable system theory such as the one sketched above arises quite naturally from processes describing the evolution of quantum systems. 
There was some motivation for a non-commutative multi-variable system theory from the theory of formal languages and multi-scale systems in \cite{BGM05,BGM06,MB10}. But the project to investigate the connections with quantum dynamical systems was started in \cite{Go09}, compare further \cite{DH14} for a recent generalization of the model in \cite{Go09} and also \cite{Go12} for related work. 
In these papers the quantum processes are based on infinite tensor product constructions which are natural from the point of view of approximating the Fock spaces in continuous time physical processes. 
Compare also \cite{GK15} for an input-output formalism of quantum Markov dynamics based on a tensor product model.

To see the connections to operator and system theory more directly and on a more fundamental level we propose here to start with the concept of `weak Markov processes', worked out by Bhat and Parthasarathy in \cite{BP94,BP95} to catch the most fundamental features of quantum Markov processes.
In fact the connections to operator theory are very direct here because the concept of a weak Markov process can be interpreted as operator theoretic dilation theory studied from a probabilistic point of view. The dilation theory will be mentioned only in side remarks in this paper however, the emphasis lies on a development of the structure theory of the processes and on the interpretation of this structure. 
The benefits of such studies go in both directions: access to operator and system theory tools for the investigation of concrete quantum models on the one hand, guidance for the development of general system theory from the questions arising in quantum models on the other hand. But to be able to do that we need to define the relevant concepts and to develop a more systematic mathematical theory. The following sketch of the contents of this paper should be read with this motivation in mind.

In Section 2
we start, for convenience, with a self-contained but rather concise description of the basic theory of weak Markov processes in discrete time. The dynamics is described by a $*$-endomorphism or, equivalently, by a row isometry. This produces a one-sided time evolution which exhibits features related to causality
and system theory more directly than other approaches. 
By additionally considering a co-invariant subspace more such features emerge which further can be given a quantum probabilistic interpretation, such as transition operators and weak filtrations. Most of this is well known but at this point there is a need to work out a kind of dictionary between quantum probabilists with their nonspatial view of processes in terms of operator algebras, quantum channels and completely positive maps on the one hand and operator theorists with their spatial view focussing on operators acting on Hilbert spaces
on the other hand. The actual physical content is a third aspect to be considered. Note in particular how in the end of Section 2 
we give an operational meaning to the elements of the free semigroup $F^+_d$ by interpreting them as certain measurement protocols. 

In Section 3
we define the notion of a representation of structure maps
$A,B,C,D$ (as described above) by a weak Markov process and in this way we get an
explicit systematic procedure to identify multi-variable linear systems (as described above) within quantum physical models. 
At this point it remains quite abstract but we go on to develop some quantum mechanical interpretation in terms of conditional states and quantum filtering. As a preparation for seeing specific representations of structure maps in quantum physical processes we develop in Section 4 
a theory of subprocesses and quotient processes
of (discrete weak Markov) processes and then show that in a suitably defined category of processes this can be reformulated as a short exact sequence. 
The main result here is a classification of extensions appearing in such short exact sequences by a construction which we call a $\gamma$-extension of processes. There is a set of contractions from which $\gamma$ can be chosen which gives a parametrization of all the ways in which two processes can be put together as subprocess and quotient process, with $\gamma = 0$ yielding the direct sum. In fact 
Section 4 can also be read as a more or less self-contained theory on its own with a lot of potential for further development beyond the rather specific use we make of it in the following sections. 

In Section 5
we show that within a $\gamma$-extension of processes we have a representation of a cascade of the original systems. Let us remark here that the notion of cascades and of more general quantum networks of systems and processes has been around for some time in a continuous time setting and this theory has been investigated intensely because of promising applications in quantum filtering and quantum control \cite{GGY08,GJ09a}. It is not identical with the input-output formalism in this paper and a detailed investigation of connections between the theory of $\gamma$-extensions and such networks has yet to be undertaken.
There is no explicit work on continuous time systems in this paper, however we provide the basis for such a comparison by describing weak Markov processes in terms of product systems which suggests how to build the theory starting from continuous product systems. Our justification for concentrating on the discrete time setting here is the same as in Helton's seminal paper 
\cite{He72}
in which he connects classical system theory ($d=1$) with operator models and scattering theory, saying: `We concentrate on discrete time systems because it is for these that the relationship is most clear' (\cite{He72}, p.15).

In fact, similar to the path followed by Helton in \cite{He72} it is quite natural in our setting to investigate connections between observability of represented multi-variable linear systems and scattering theory for quantum physical models. We study observability in Section 6 
and show that in a quantum model it can be given the same interpretation as in classical system theory, namely that by observing the outputs we have an indirect way to measure and to investigate those parts of the internal space which have the character of a black box. 
While the choice of an input space is always rather canonical in our setting we argue that we also have a rather canonical choice for an output space if the process is a $\gamma$-extension: here the input space of the subprocess provides an interesting output space for the process. Looking for observability in this situation amounts to the question how much we can find out about the process by observations which only involve the subprocess.
The extreme case when we can find out everything
about the process by such special observations we call observable by the subprocess.

While in Section 6 all this is examined rather from the spatial point of view taken by an operator theorist, in Section 7 we confront it with a nonspatial approach provided by a quantum Markov chain given in an operator algebraic setting. The connection comes from the observation that to a stationary quantum Markov chain (which includes also an invariant state) we can associate a dual weak Markov process together with a subprocess (with $1$-dimensional internal space)
and then we can apply the techniques established earlier. We discuss an example and illustrate a quantum physical interpretation of the recursions in the noncommutative Fornasini-Marchesini system in terms of quantum filtering and quantum tomography. Some work remains to be done here to make the connection with the original literature on these topics more explicit. 

Finally we show that observability of the dual weak process by this subprocess is equivalent to asymptotic completeness in a scattering theory for (operator algebraic) stationary quantum Markov chains first introduced by K\"{u}mmerer and Maassen in \cite{KM00}. For illustration we write down the M{\o}ller operator but the actual construction and the details of this scattering theory need a setting with two-sided time evolution and we refer to the literature for these details. We finish this paper with a somewhat sketchy overview and a discussion about work on the corresponding operators on the level of weak processes, again giving suitable references for the reader who wants to get deeper into this. 

One of the reasons why we consider this equivalence to be important on a conceptual level is that the
scattering theory in \cite{KM00} is motivated by Lax-Phillips scattering theory \cite{LP67} and proceeds to construct an operator-algebraic analogue but here it becomes clear that it is really more than just an analogue: we can actually go to a kind of multi-variable generalization of Lax-Phillips scattering theory. 
See in particular \cite{BV05} and also \cite{Go09} on this topic. It would be interesting to investigate if better computational procedures can be developed based on these insights. 

For example the criterion for observability by a subprocess in terms of the transition operator $Z$ established in Section 6 is a generalization of a criterion for asymptotic completeness of stationary Markov chains obtained in 
\cite{Go04a,GKL06}.
This is an excellent case of the cross-fertilization between quantum probability on the one hand and operator and system theory on the other hand which we have in mind.
It is based on a very special case of our general theory, subprocesses with $1$-dimensional internal space, and it is reasonable to assume that much more can be achieved here by future work.

One of the referees for this paper seemed somewhat disappointed that we didn't get a closer structural match with the results of Helton in the already mentioned \cite{He72}. My following comment on that may be of wider interest for readers of this paper. As explained above, what we achieve is a realization of linear system theory concepts from a quantum dynamics running in the background. Quantum mechanics imposes certain interpretations on us which should be consistent with the interpretation of a system theory concept such as observability. For example, if you compare the dynamics by which Helton in \cite{He72}, Section 2, backs up his structure maps $A,B,C,D$ with the corresponding Definition \ref{def:structure} in our paper then you notice that we aim for a direct approximation of the system state by output states within the quantum process and in real physical time and for this reason we do not choose the output spaces orthogonal to the system space, as Helton does. For different purposes one can think about other choices, and in fact the sketch of scattering theory results at the end of Section 7 makes it plausible that in a systematic analysis of two-sided processes it should be possible to obtain a closer match with Helton's results.

\section{Weak Markov Processes}
\label{section:weak}

Unitary dynamics on a Hilbert space is the most basic way of describing quantum mechanical evolution. If causality is taken into account 
and one restricts the attention to observables belonging only to the future (or only to the past)
then it becomes natural to study $*$-endomorphisms
of $\qB(\qH)$, the algebra of bounded linear operators on a Hilbert space $\qH$, as opposed to $*$-automorphisms which implemented by unitaries come up in the Heisenberg picture of quantum mechanics and are well understood. A convincing argument in this direction is presented in \cite{Ar03}, Section 1.2. From now on let $\theta: \qB(\qH) \rightarrow \qB(\qH)$ be a $*$-endomorphism and $\qH$ a separable Hilbert space.  
Then it follows from the representation theory of $\qB(\qH)$ that there exists a separable Hilbert space $\qP$ and an isometry $V: \qH \otimes \qP \rightarrow \qH$ such that for all $X \in \qB(\qH)$ 
\[
\theta(X) = V \; X \otimes \eins_\qP \; V^*\,.
\]
We assume $\theta \not=0$, then $\theta$ is automatically injective. See \cite{La93} for more details. 
If $(\epsilon_k)^d_{k=1}$ (with $d \in \Nset$ or $d=\infty$) is an orthonormal basis of $\qP$ then with 
\[
V_k := V |_{\qH \otimes \epsilon_k} \in \qB(\qH),
\quad k=1,\ldots,d,
\]
we can also write
\[
\theta(X) = \sum^d_{k=1} V_k X V^*_k
\]
(limits to be understood in the strong operator topology if $d=\infty$) which is called a {\it Kraus decomposition}.  The $V_k$ are isometries with orthogonal ranges, so alternatively (and with the same notation) we also think of $V$ as a {\it row isometry}
\[
V = (V_1, V_2, \ldots, V_d): \; \bigoplus^d_1 \qH \rightarrow \qH\,.
\]
$V$ is called a {\it row unitary} if $\sum^d_{k=1} V_k V^*_k = \eins$
and this is equivalent to $\theta$ being unital, i.e., $\theta(\eins)=\eins$. If this additional assumption is used in the following then it will always be explicitly stated.
\\
\\
Using induction we define row isometries $V^{(n)} \colon \qH \otimes \bigotimes^n_1 \qP
\rightarrow \qH$ 
(where $n\in \Nset$) by $V^{(1)} := V$ and, for $\tilde{\xi} \in \qH \otimes \bigotimes^{n-1}_1 \qP$ and $\eta \in \qP$
\[
V^{(n)} (\tilde{\xi} \otimes \eta)
= V \big( V^{(n-1)}\tilde{\xi} \otimes \eta \big)\,.
\]
Later we need the following equivalent description for the adjoints: If $\xi \in \qH$
and $V^* \xi =: \sum_k \xi_k \otimes \eta_k \in \qH \otimes \qP$ then
\[
V^{(n)*} \xi = \sum_k V^{(n-1)*} (\xi_k) \otimes \eta_k
\in (\qH \otimes \bigotimes^{n-1}_1 \qP) \otimes \qP = \qH \otimes \bigotimes^n_1 \qP\,.
\]
It is not difficult to check that this implements the $n$-th power of $\theta$:
\[
\theta^n(X) = V^{(n)}\; X \otimes \eins_{\bigotimes^n_1 \! \qP}\;
V^{(n)*} = \sum_{\alpha \in F^+_d, |\alpha|=n} V_\alpha X V^*_\alpha\,.
\]
Here the notation is $V^*_\alpha
:= (V_\alpha)^* = V^*_{\alpha_n} \ldots V^*_{\alpha_1}$ if $\alpha = \alpha_1 \ldots \alpha_n$ and $|\alpha|=n$ is the length of the word. 
This Kraus decomposition where a sum over all words of length $n$ occurs gives the first connection to the multi-variable formalism sketched in Section 1.
\\
Remark: The tensor products
$\big( \bigotimes^n_1 \qP \big)_{n \in \Nset}$ appearing in these formulas represent the (discrete) product system associated to the endomorphism $\theta$ and though we do not go into continuous time systems in this paper it is worth noting that the natural starting point to translate our results to continuous time would be to consider continuous product systems and in this way make the connection with the theory exposed in \cite{Ar03,Bh01,Go06}. 

To introduce processes which resemble Markov processes from probability theory we need to specify a subspace $\fh \subset \qH$ (by which we always mean a closed subspace if not otherwise stated).
Let us denote the orthogonal projection onto $\fh$ by $p = P_{\fh}$.
(In this paper we use consistently the notation $P_\qL$ for the orthogonal projection onto a subspace $\qL$.)
Given $\fh \subset \qH$ we have (with $n \in \Nset_0$) a family of normal $*$-homomorphisms
\begin{eqnarray*}
J^{(n)}: \qB(\fh) &\rightarrow& \qB(\qH) \\
x &\mapsto& \theta^n (x p)
\end{eqnarray*}
and the compressions $Z_n: \qB(\fh) \rightarrow \qB(\fh)$ defined by
\[
p\, J^{(n)}(x)\, p =: Z_n(x)\, p \quad  \big[ = J^{(0)}(Z_n(x)) \big]
\]
which are contractive completely positive maps. For the processes to be defined below the $J^{(n)}$ play the role of {\it non-commutative random variables} and the $Z_n$ are {\it transition operators}. 

The subspace $\fh$ of $\qH$ is called {\it invariant} if $V_k \fh \subset \fh$ for all $k=1,\ldots d$ and {\it co-invariant} 
if $V^*_k \fh \subset \fh$ for all $k=1,\ldots d$. 
The importance of co-invariant subspaces in this context has been observed by many, see for example \cite{BJKW00} for various related topics. We note some useful properties equivalent to co-invariance. 

\begin{Lemma} \normalfont \label{lem:co}
The following are equivalent:
\begin{itemize}
\item[(1)]
$\theta(p)\, p = \theta (\eins)\, p$\,.
\item[(2)]
$\fh \perp V (\fh^\perp \otimes {\mathcal P})$.
\item[(3)]
$\fh$ is co-invariant.
\end{itemize}
\end{Lemma}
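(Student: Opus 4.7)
The plan is to translate all three conditions into statements about the isometry $V$ on $\qH \otimes \qP$, using the dictionary $\theta(p) = V(p \otimes \eins_\qP)V^*$, $\theta(\eins) = VV^*$, and the orthogonal decomposition $\qH \otimes \qP = (\fh \otimes \qP) \oplus (\fh^\perp \otimes \qP)$.

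First I would handle (2)$\Leftrightarrow$(3). The condition $\fh \perp V(\fh^\perp \otimes \qP)$ is, by taking adjoints, exactly $V^* \fh \subset (\fh^\perp \otimes \qP)^\perp = \fh \otimes \qP$. Expanding with respect to the orthonormal basis $(\epsilon_k)$ of $\qP$, the formula $V^*\xi = \sum_k V_k^*\xi \otimes \epsilon_k$ gives $V^*\xi \in \fh \otimes \qP$ iff $V_k^*\xi \in \fh$ for every $k$, which is co-invariance. This step is immediate.

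Next I would prove (1)$\Leftrightarrow$(2). Using $(\eins - p) \otimes \eins_\qP = P_{\fh^\perp \otimes \qP}$ one computes
\[
\theta(\eins) - \theta(p) = V \big( (\eins - p) \otimes \eins_\qP \big) V^* = V P_{\fh^\perp \otimes \qP} V^*.
\]
Since $V$ is an isometry, its restriction to $\fh^\perp \otimes \qP$ is also an isometry, and therefore $V P_{\fh^\perp \otimes \qP} V^*$ is the orthogonal projection onto $V(\fh^\perp \otimes \qP)$. Now (1) is the same as $(\theta(\eins) - \theta(p))p = 0$; using positivity of $\theta$ we have $\theta(\eins) - \theta(p) \geq 0$, and for a positive operator $T$ the identity $Tp = 0$ is equivalent to $pTp = 0$. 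So (1) amounts to $pVP_{\fh^\perp \otimes \qP}V^*p = 0$, i.e., the projection onto $V(\fh^\perp \otimes \qP)$ vanishes on $\fh$, which is exactly (2).

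The only slightly non-mechanical ingredient is noticing that $V P_{\fh^\perp \otimes \qP} V^*$ is itself a projection (so that condition (1) cleanly reduces to an orthogonality of subspaces) and the small remark that for $T \geq 0$, $Tp = 0 \Leftrightarrow pTp = 0$; apart from this, everything is bookkeeping with the Kraus decomposition.
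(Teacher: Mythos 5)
Your argument is correct and is essentially the paper's proof: both rest on the observation that $\theta(\eins-p)=V\bigl((\eins-p)\otimes\eins_\qP\bigr)V^*$ is the orthogonal projection onto $V(\fh^\perp\otimes\qP)$, which gives $(1)\Leftrightarrow(2)$, and on passing to adjoints to identify $(2)$ with $V^*\fh\subset\fh\otimes\qP$, i.e.\ co-invariance. The only cosmetic differences are that the paper closes the cycle with a direct Kraus-decomposition computation for $(3)\Rightarrow(1)$ where you argue $(2)\Leftrightarrow(3)$ in both directions at once, and your positivity remark ($Tp=0\Leftrightarrow pTp=0$) is unnecessary since $Qp=0$ for a projection $Q$ already states the orthogonality of ranges.
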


Let us write $Z$ for $Z_1$ and state the following modification
of the previous lemma:

\begin{Lemma} \normalfont \label{lem:co+}
The following are equivalent:
\begin{itemize}
\item[(1')]
$p \le \theta (p)$\,.
\item[(2')]
$\fh \subset V (\fh \otimes {\mathcal P})$.
\item[(3')]
$p \le \theta(\eins)$ and $\fh$ is co-invariant.
\item[(4')]
$Z(\eins_{\fh}) = \eins_{\fh}$.
\end{itemize}
\end{Lemma}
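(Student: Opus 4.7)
The plan is to leverage the fact, immediate from $V^*V = \eins$, that $\theta$ is a $*$-endomorphism of $\qB(\qH)$, so that $\theta(p)$ is a projection. Moreover $\theta(p) = V(p \otimes \eins_{\qP})V^*$ acts as the identity on $V(\fh \otimes \qP)$ and annihilates its orthogonal complement, so $\Ran(\theta(p)) = V(\fh \otimes \qP)$. Because for projections the inequality $p \le \theta(p)$ is synonymous with $\Ran(p) \subset \Ran(\theta(p))$, this already gives $(1') \Leftrightarrow (2')$.

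For $(1') \Leftrightarrow (4')$ I would invoke the elementary fact that for projections $p, q$ one has $pqp = p$ if and only if $p \le q$. Using the computation $Z(\eins_{\fh})\,p = p\,J^{(1)}(\eins_{\fh})\,p = p\,\theta(p)\,p$, condition $(4')$ translates into $p\,\theta(p)\,p = p$, which by the projection fact is exactly $(1')$.

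For $(1') \Leftrightarrow (3')$ I would combine $(1')$ with the previous Lemma \ref{lem:co}. Assuming $(1')$: positivity of $\theta$ gives $\theta(p) \le \theta(\eins)$, hence $p \le \theta(\eins)$; meanwhile $p \le \theta(p)$ yields both $\theta(p)\,p = p$ and $\theta(\eins)\,p = p$, so $\theta(p)\,p = \theta(\eins)\,p$, which is condition (1) of Lemma \ref{lem:co} and gives co-invariance. Conversely, under $(3')$, Lemma \ref{lem:co} converts co-invariance into $\theta(p)\,p = \theta(\eins)\,p$; combined with $\theta(\eins)\,p = p$ (from $p \le \theta(\eins)$), this yields $\theta(p)\,p = p$, which forces $p \le \theta(p)$ because $\theta(p)$ is a projection.

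The one slightly subtle point is recognizing that $\theta(p)$ is a projection rather than merely a positive contraction, since this is what upgrades the order inequality $p \le \theta(p)$ to the working equalities $\theta(p)\,p = p$ and $p\,\theta(p)\,p = p$; beyond that, everything amounts to bookkeeping with $V^*V = \eins$ and Lemma \ref{lem:co}.
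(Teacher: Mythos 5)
Your proof is correct and follows essentially the same route as the paper: the paper's (very terse) argument reduces each primed condition to the corresponding condition of Lemma \ref{lem:co} together with $p \le \theta(\eins)$ and notes that $(1')\Leftrightarrow(4')$ is immediate from $Z(\eins_\fh) = p\,\theta(p)|_\fh$, which is exactly the content you spell out via the observations that $\theta(p)$ is the projection onto $V(\fh \otimes \qP)$ and that $pqp = p$ iff $p \le q$ for projections. Your write-up just makes explicit the bookkeeping the paper leaves to the reader.
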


Below we prove Lemma \ref{lem:co}. It is then easy to get
Lemma \ref{lem:co+} by checking that ($\ell'$) is nothing but
($\ell$) together with $p \le \theta(\eins)$ (for $\ell=1,\ldots,3$). The equivalence of (1') and (4') is
immediate because, by definition, $Z(\eins_{\fh}) = p\, \theta(p)|_{\fh}$. 

If $V$ is a row unitary then all the properties in Lemmas \ref{lem:co} and \ref{lem:co+} are equivalent; for example $(3)$ and
$(3')$ are equivalent for a row unitary because in this case we have $\theta(\eins)=\eins$.
This means that for a row unitary we can always use the simpler statements (1')-(4') when dealing with co-invariant subspaces. 

\begin{proof}
Consider an orthogonal projection $q$ onto $q \qH$. Then
$\theta(q) = V (q \otimes \eins_{\mathcal P}) V^*$ is the
orthogonal projection onto $V(q\qH \otimes {\mathcal P})$. Hence
$\theta(\eins - p)$ projects onto  $V(\fh^\perp \otimes \qP)$. This gives $(1) \Leftrightarrow (2)$. 

Applying $V^*$ to (2) we obtain
$V^* \fh \perp \fh^\perp \otimes {\mathcal P}$, hence
$V^* \fh \subset \fh \otimes {\mathcal P}$ and 
$V^*_k \fh \subset \fh$ for all $k=1,\ldots d$ which is (3). Conversely, from (3) we get
\[
\theta(p) p = \sum^d_1 V_k p V^*_k p
= \sum^d_1 V_k V^*_k p = \theta(\eins) p
\]
which is (1). 
\end{proof}

The following definition is consistent with the terminology used by Bhat and Parthasarathy in \cite{BP94,BP95} where also continuous time and versions with $C^*$-subalgebras of $\qB(\fh)$ are considered which allows the inclusion of classical Markov processes by restricting to commutative subalgebras. We only consider discrete time steps and focus on the algebra $\qB(\fh)$ of all bounded operators. This allows a lean formulation from which further (probabilistic) features of the processes can be pulled out afterwards.

\begin{Definition} \normalfont \label{def:weak}
A {\it minimal discrete weak Markov process} is a triple $(\qH, V, \fh)$ where $V$ is a row isometry on a Hilbert space $\qH$ which contains the Hilbert space $\fh$ as a co-invariant subspace with respect to $V$, 
and such that we have minimality, i.e., 
\[
\qH = \overline{span} 
\{ V_\alpha \fh \colon \alpha \in F^+_d \}\,.
\]
If $V$ is a row unitary then we call the process {\it unital}.
\end{Definition}

For simplicity in this paper we refer to minimal discrete weak Markov processes as processes.
Note that if $V$ is originally defined on a larger Hilbert space 
which contains the Hilbert space $\fh$ as a co-invariant subspace with respect to $V$
then we can always restrict to a space $\qH$ satisfying the additional minimality assumption which ensures that $\qH$ is the smallest invariant subspace containing $\fh$. Because $\fh$ is co-invariant it follows 
that $\qH$ is even reducing in the original larger Hilbert space.

Remark:
From the point of view of dilation theory $V$ is the minimal isometric dilation of its compression to $\fh$ and as such it is determined by it up to unitary equivalence, see \cite{Po89a}. 

Let us now investigate the following increasing sequence of subspaces which is called the weak filtration associated to the process: $q_k := \theta^k(p),\;
p_k := sup(q_0,\ldots,q_k),\; \fh_k := p_k \qH$ (for
$k \in \Nset_0$). Then $p=p_0 \le p_1 \le p_2 \le \ldots$ and $\fh = \fh_0 \subset \fh_1 \subset \fh_2 \subset \ldots$.

Note that if the process is unital then $q_k = p_k$ for all $k$ and the arguments simplify. For unital processes 
we have $\fh_n = V^{(n)} (\fh \otimes \bigotimes^n_1 \!\qP)$ and the
inclusion $\fh_n \subset \fh_{n+1}$ is mapped by $V^{(n+1)*}$ to an inclusion $\fh \otimes \bigotimes^n_1 \!\qP \subset
\fh \otimes \bigotimes^{n+1}_1\! \qP$. Hence in this case we have an identification of $\qH$ with the inductive limit of the sequence $\big( \fh \otimes \bigotimes^n_1 \! \qP \big)$. 

\begin{Proposition} \normalfont \label{prop:weak}
Let $(\qH, V, \fh)$ be a process. Then 
\begin{itemize}
\item[(a)]
$V(\fh_m \otimes {\mathcal P}) \subset \fh_{m+1}$ (for all $m \in \Nset_0$)
\item[(b)]
$Z_n = Z^n$ (for all $n \in \Nset$)
\item[(c)]
If $0 \le m \le n$ then $p_m V^{(n)} = q_m V^{(n)}$.
\item[(d)]
If $0 \le m \le n$ then
\[
p_m J^{(n)}(x) p_m = J^{(m)}(Z^{n-m}(x))\quad\quad
\]
\end{itemize}
\end{Proposition}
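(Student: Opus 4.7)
My overall plan is to dispatch (a) and (b) directly, use them to attack (c) which is the real engine, and derive (d) as an application of (b) and (c). The common ingredient throughout is co-invariance in the form of Lemma~\ref{lem:co}(2). For (a), I would use $\fh_m = \overline{\linh}\bigcup_{k=0}^{m} q_k\qH$ (the join of projections equals the closed span of their ranges) together with the identity $V(q_k\qH \otimes \qP) = V(q_k \otimes \eins_\qP)V^*\qH = \theta(q_k)\qH = q_{k+1}\qH$, which needs only that $V$ is a row isometry. Linearity and continuity of $V$ then yield $V(\fh_m \otimes \qP) = \overline{\linh}\bigcup_{k=0}^{m} q_{k+1}\qH \subset \fh_{m+1}$.

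For (b) I would use induction on $n$, the base case $Z_1 = Z$ being the definition of $Z$. The inductive step rests on the compression identity
\begin{equation*}
p\,\theta(y)\,p \;=\; p\,\theta(pyp)\,p \qquad (y \in \qB(\qH)),
\end{equation*}
which follows from co-invariance: Lemma~\ref{lem:co}(2) gives $V^*p = (p \otimes \eins_\qP)V^*p$, and inserting this relation and its adjoint $pV = pV(p \otimes \eins_\qP)$ into $p\,\theta(y)\,p = pV(y \otimes \eins_\qP)V^*p$ collapses the middle to $pyp$. Taking $y := \theta^{n-1}(xp)$ and using the inductive hypothesis $pyp = Z^{n-1}(x)\,p$, I obtain $Z_n(x)\,p = p\,\theta(Z^{n-1}(x)\,p)\,p = Z(Z^{n-1}(x))\,p = Z^n(x)\,p$, hence $Z_n = Z^n$ on $\qB(\fh)$.

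Part (c) is the main obstacle. I would first prove the key case $p_m V^{(m)} = q_m V^{(m)}$; the general assertion for $n \ge m$ then follows from the factorisation $V^{(n)} = V^{(m)} \circ (V^{(n-m)} \otimes \eins_{\bigotimes^m_1 \!\qP})$ obtained by iterating the recursive definition of $V^{(n)}$. For the key case, decompose $\qH \otimes \bigotimes^m_1 \!\qP = (\fh \oplus \fh^\perp) \otimes \bigotimes^m_1 \!\qP$: on the $\fh$-summand $V^{(m)}$ lands in $q_m\qH$, where $p_m$ and $q_m$ already coincide. It then suffices to show $V^{(m)}(\fh^\perp \otimes \bigotimes^m_1 \!\qP) \perp q_k\qH$ for every $0 \le k \le m$. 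Writing $V^{(m)} = V^{(k)}(V^{(m-k)} \otimes \eins_{\bigotimes^k_1 \!\qP})$ and using isometry of $V^{(k)}$, this reduces to $V^{(m-k)}(\fh^\perp \otimes \bigotimes^{m-k}_1 \!\qP) \perp \fh$, which I would obtain by iterating Lemma~\ref{lem:co}(2): an induction on $j$ using $V^{(j)} = V \circ (V^{(j-1)} \otimes \eins_\qP)$ yields the auxiliary fact $V^{(j)}(\fh^\perp \otimes \bigotimes^j_1 \!\qP) \subset \fh^\perp$ for every $j \ge 1$. Keeping the nested tensor decompositions straight at each step is the delicate bookkeeping point.

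For (d), (c) lets me replace $p_m$ on each side by $\theta^m(p) = V^{(m)}(p \otimes \eins)V^{(m)*}$. Substituting $V^{(n)} = V^{(m)}(V^{(n-m)} \otimes \eins_{\bigotimes^m_1 \!\qP})$ into $\theta^n(xp) = V^{(n)}(xp \otimes \eins)V^{(n)*}$ and using $V^{(m)*}V^{(m)} = \eins$ collapses the expression to $V^{(m)}\bigl((p\,\theta^{n-m}(xp)\,p) \otimes \eins_{\bigotimes^m_1 \!\qP}\bigr)V^{(m)*} = \theta^m(Z_{n-m}(x)\,p) = J^{(m)}(Z_{n-m}(x))$, and invoking (b) rewrites this as $J^{(m)}(Z^{n-m}(x))$, as required.
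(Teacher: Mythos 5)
Your proof is correct and follows essentially the same route as the paper's: all four parts rest on Lemma \ref{lem:co}(2) and the factorisation $V^{(n)} = V^{(m)}\,(V^{(n-m)} \otimes \eins_{\bigotimes^m_1 \!\qP})$, with (c) obtained from the orthogonality $V^{(j)}(\fh^\perp \otimes \bigotimes^j_1 \!\qP) \perp \fh$ and (d) reduced to (b) and (c). The only differences are cosmetic, e.g.\ phrasing (b) through the compression identity $p\,\theta(y)\,p = p\,\theta(pyp)\,p$ rather than unwinding $V^{(n)}$ directly, and running the orthogonality argument in (c) from the domain decomposition $(\fh \oplus \fh^\perp) \otimes \bigotimes^m_1 \!\qP$ rather than from the ranges $q_k\qH$.
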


(a) states an adaptedness property for the process and the filtration.
(b) means that $n \mapsto Z_n$ defines a semigroup. We can think of it as a nonspatial analogue of the Lax-Phillips contraction semigroup 
\cite{LP67} or as a noncommutative analogue of the Chapman-Kolmogorov semigroup of transition operators for classical Markov processes \cite{Fe68}. 
In fact (b),(c) and (d) resemble properties of Markov processes in classical probability and hence motivate the terminology `weak Markov process'. Versions of (d) appear in Bhat's papers as the `weak Markov property'. 

\begin{proof}
$\;\theta(p_m)$ is the projection onto
$V(\fh_m \otimes {\mathcal P})$. Hence (a) is nothing but a reformulation of the obvious $\theta(p_m) \le p_{m+1}$. 
We prove (b) by induction. For $n=1$ we have the definition of $Z$. Now suppose that for $n \ge 2$
\[
Z_{n-1}(x) = p\, V^{(n-1)}\, x \otimes \eins\; V^{(n-1)*}|_\fh = Z^{n-1}(x)\,.
\]
Then we can use (2) of Lemma \ref{lem:co} in the form
$p V = p V (p \otimes \eins)$ to get
\begin{eqnarray*}
Z_{n} (x) &=& p\, V^{(n)}\, x \otimes \eins\; V^{(n)*}|_\fh \\
&=& p\, V \big[ (V^{(n-1)} \otimes \eins)((x \otimes \eins) \otimes \eins) (V^{(n-1)*} \otimes \eins) \big] V^*|_\fh \\
&=& p\, V (p \otimes \eins) \big[ \ldots \big] (p \otimes \eins) V^*|_{\fh} \\
&=& p\, V (Z^{n-1}(x) \otimes \eins) V^*|_{\fh} = Z^{n}(x) \\
\end{eqnarray*}
To prove (c) we note that (2) of Lemma \ref{lem:co}
in the form $V(\fh^\perp \otimes {\mathcal P}) \perp \fh$
can be iterated to yield 
\[
V^{(\ell)} (\fh^\perp \otimes {\mathcal P}^{\otimes \ell}) \perp \fh\,.
\]
for all $\ell \in \Nset$. 
Hence for all $\ell \le m$ we find, by applying $V^{(m-\ell)}$, 
\[
V^{(m)} (\fh^\perp \otimes {\mathcal P}^{\otimes m})
\perp 
V^{(m - \ell)} (\fh \otimes {\mathcal P}^{\otimes (m - \ell)})
\] 
which implies
$
p_m V^{(m)} (p^\perp \otimes \eins) = 0$.

Together with $q_m = \theta^m(p) = V^{(m)}\, p \otimes \eins \,V^{(m)*}$ we obtain
\[
p_m V^{(m)} = p_m V^{(m)}\, p \otimes \eins = p_m q_m V^{(m)}
= q_m V^{(m)}\,.
\]
The iterative definition of the $V^{(m)}$ shows that their ranges do not increase if $m$ increases. Hence also $p_m V^{(n)} = q_m V^{(n)}$ whenever $m \le n$. 

To get (d) we start from the definition of $Z_{n-m}$,
\[
p\, J^{(n-m)}(x) p = Z_{n-m}(x)\, p\,, \quad\quad (m \le n),
\]
and apply $\theta^m$ to get
\[
q_m J^{(n)}(x) q_m = \theta^m (Z_{n-m}(x) p)
= J^{(m)}(Z_{n-m}(x))
\]
Now because of (b) and (c) we can replace $Z_{n-m}$ by
$Z^{n-m}$ and $q_m$ by $p_m$.
\end{proof}

The following observation, also noted in \cite{Bh96}, is crucial for our approach. Bhat remarks in \cite{Bh96}, p.562, in this context: `Roughly speaking there is also an additive structure when we deal with general quantum dynamical semigroups \ldots A detailed study of such systems is yet to be undertaken.' The work presented in this paper goes into this direction.

\begin{Proposition} \normalfont \label{prop:wandering}
Let $(\qH, V, \fh)$ be a process. Then
\[
\qE := \fh_1 \ominus \fh_0 = \overline{span} (\fh, V(\fh \otimes {\mathcal P}))
\ominus \fh
\]
is a wandering subspace, i.e., 
$V_\alpha \qE \perp V_\beta \qE$ if $\alpha \not= \beta$, and
\[
\qH = \fh \oplus \bigoplus_{\alpha \in F^+_d} V_\alpha \qE,
\quad
\fh_n = \fh \oplus \bigoplus_{|\alpha| < n} V_\alpha \qE 
\]
\end{Proposition}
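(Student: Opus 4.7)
The plan is to first establish the wandering property, which is the main technical content, and then deduce the two filtration formulas by induction on $n$, using the identity $p_n = p_{n-1} \vee \theta(p_{n-1})$ as the engine of the inductive step.

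For the wandering property I would reduce the general claim $V_\alpha \qE \perp V_\beta \qE$ for $\alpha \neq \beta$ to the special case $V_\alpha \qE \perp \qE$ with $|\alpha| \geq 1$. This reduction exploits only the row-isometry identity $V_k^* V_\ell = \delta_{k\ell}\,\eins$: cancelling the longest common prefix of $\alpha$ and $\beta$ either leaves two words with different first letters (immediate orthogonality via ranges of distinct $V_k$) or makes one of them empty. The key input for the remaining case is the observation that iterated co-invariance yields $V_\alpha^* \fh \subset \fh$ for every $|\alpha| \geq 1$, since $V_\alpha^* = V_{\alpha_n}^*\cdots V_{\alpha_1}^*$ and each factor preserves $\fh$; equivalently $V_\alpha \fh^\perp \perp \fh$. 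Because $\qE = \fh_1 \ominus \fh \subset \fh + V(\fh \otimes \qP)$ and $\qE \perp \fh$, showing $V_\alpha \qE \perp \qE$ reduces to checking $V_\alpha \qE \perp \fh$ and $V_\alpha \qE \perp V_k \fh$ for every generator $k$. The first follows immediately from $V_\alpha \fh^\perp \perp \fh$. For the second I would compute $\langle V_\alpha \xi, V_k \eta\rangle = \langle V_k^* V_\alpha \xi, \eta\rangle$: this vanishes if $\alpha_1 \neq k$ by the row-isometry identity, and if $\alpha_1 = k$ it becomes $\langle V_{\alpha'}\xi, \eta\rangle$ with $\alpha' = \alpha_2\ldots \alpha_n$, which is zero either because $\xi \in \qE \perp \fh$ (when $\alpha'$ is empty) or by $V_{\alpha'}\fh^\perp \perp \fh$ once more.

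For the direct-sum decomposition I would induct on $n$, with base case $\fh_1 = \fh \oplus \qE$ being the definition of $\qE$. For the inductive step the key is $p_n = p_{n-1} \vee \theta(p_{n-1})$: the inequality $p_n \leq p_{n-1} \vee \theta(p_{n-1})$ holds because $\theta(p_{n-1}) \geq \theta(q_{n-1}) = q_n$ (and $p_{n-1}$ absorbs $q_0,\ldots,q_{n-1}$), while the reverse inequality comes from Proposition \ref{prop:weak}(a) giving $\theta(p_{n-1}) \leq p_n$. Translated into subspaces this reads $\fh_n = \fh_{n-1} + V(\fh_{n-1} \otimes \qP)$. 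Expanding $V(\fh_{n-1} \otimes \qP) = \bigoplus_k V_k \fh_{n-1}$, applying the induction hypothesis to $\fh_{n-1}$, and using $V_k V_\alpha = V_{k\alpha}$ together with $V_k \fh \subset \fh_1 = \fh \oplus \qE$ yields $\fh_n \subset \fh + \sum_{|\beta| \leq n-1} V_\beta \qE$. The wandering property together with $V_\beta \qE \perp \fh$ (shown above for $|\beta|\geq 1$, trivial for $\beta = 0$) upgrades this sum to an orthogonal direct sum, and the reverse inclusion is immediate because $V_\beta \qE \subset \fh_{|\beta|+1} \subset \fh_n$. The global formula then follows from the standing assumption $\qH = \overline{span}\{V_\alpha \fh : \alpha \in F^+_d\}$ and the inclusion $V_\alpha \fh \subset \fh_{|\alpha|}$ (itself a consequence of iterating Proposition \ref{prop:weak}(a)), which together force $\qH$ to be the closure of $\bigcup_n \fh_n$.

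The main obstacle I anticipate is organizing the wandering argument cleanly, particularly the orthogonality $V_\alpha \qE \perp V(\fh \otimes \qP)$ where the case distinction on $\alpha_1$ must interact correctly with the induction on $|\alpha|$. Once that is in hand, and once the identity $p_n = p_{n-1} \vee \theta(p_{n-1})$ is isolated, the filtration and the global decomposition follow by a routine induction.
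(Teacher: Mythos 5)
Your proof is correct and follows essentially the same route as the paper: reduce the wandering property to $\qE \perp V_\alpha \qE$ for $\alpha \neq 0$ by cancelling common prefixes, use iterated co-invariance to obtain $V_\beta \qE \perp \fh$, and then verify orthogonality to $\fh$ and to each $V_k \fh$ separately. Your induction via $p_n = p_{n-1} \vee \theta(p_{n-1})$ correctly supplies the details of the decomposition formulas that the paper dismisses as ``immediate.''
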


\begin{proof}
To prove that $\qE$ is wandering it is enough to show that $\qE \perp V_\alpha \qE$ for all $\alpha \not= 0$. Suppose $\alpha \not= 0$. Because
$\qE \perp \fh$ we conclude by (2) of Lemma \ref{lem:co}
that $V_\beta \qE \perp \fh$ for all $\beta \in F^+_d$.
In particular $V_\alpha \qE \perp \fh$. Writing $\alpha = k \beta$ with $k \in \{1,\ldots,d\}$ and $\beta \in F^+_d$ we find $V_\alpha \qE \perp V_k \fh$. Finally if
$k' \in \{1,\ldots,d\}$ but $k'\not= k$ then
$V_\alpha \qE \perp V_{k'} \qH$ because $V_k$ and $V_{k'}$ have orthogonal ranges. Putting it all together we have $\qE \perp V_\alpha \qE$.
The other assertions are now immediate.
\end{proof}

The following results indicate how the $V_\alpha$ are related to the quantum physical behaviour of the process. For this we apply the standard interpretations of quantum physics to the mathematical objects. Suppose $X: \qH \rightarrow \qH$ and $Y: \qP \rightarrow \qP$ are linear operators. Then for $n \in \Nset_0$ and  $m \in \Nset$ we have the following linear operators on $\qH$

\begin{eqnarray*}
X_n &:=& V^{(n)} \; X \! \otimes \bigotimes^{n}_1 \!
\eins_\qP \; V^{(n)*}\,, \\
Y_m &:=& V^{(m)} \; \eins_\qH \! \otimes \bigotimes^{m-1}_1 \!
\eins_\qP \otimes Y \; V^{(m)*}
\end{eqnarray*}
(empty tensor products to be omitted for $n=0$ and $m=1$). We refer to these operators as observables.

\begin{Proposition} \normalfont \label{prop:operational}
Suppose that the process $(\qH, V, \fh)$ is unital. Then the observables $Y_m$ commute with each other and $Y_m$ commutes with $X_n$ whenever $m \le n$.

Now suppose further that $Y \epsilon_j = j \,\epsilon_j$ for the orthonormal basis $(\epsilon_j)^d_{j=1}$ of $\qP$ which we use to define the isometries $V_j$. 
If the process is prepared in a vector state given by a unit vector $\eta$ in the range of $V_\alpha$ so that the $m$-th letter in $\alpha$ is equal to $k$
then the measurement of $Y_m$ yields the outcome $k$ with certainty.
\end{Proposition}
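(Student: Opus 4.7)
The plan is to exploit that in the unital case $V$ is a unitary from $\qH\otimes\qP$ onto $\qH$, so that $V^*V=\eins$ and $VV^*=\eins$. The key elementary fact this buys is a ``tensor commutation'': for any operators $A$ on $\qH$ and $B$ on $\qP$, cancelling $V^*V=\eins$ in the middle gives
\[
V(A\otimes\eins_\qP)V^*\cdot V(\eins_\qH\otimes B)V^* \;=\; V(A\otimes B)V^* \;=\; V(\eins_\qH\otimes B)V^*\cdot V(A\otimes\eins_\qP)V^*,
\]
so every operator of ``first-factor'' form $\theta(A)=V(A\otimes\eins_\qP)V^*$ commutes with every operator of ``second-factor'' form $V(\eins_\qH\otimes B)V^*$.

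First I would unfold $Y_m$ using the iterative factorization $V^{(m)}=V^{(m-1)}(V\otimes\eins_\qP^{\otimes(m-1)})$ together with $V^{(m-1)}V^{(m-1)*}=\eins$ from unitality, to conclude that $Y_m=\theta^{m-1}(Y_1)$ where $Y_1=V(\eins_\qH\otimes Y)V^*$ is of second-factor form. By definition $X_n=\theta^n(X)$, and for $n\ge1$ this equals $V(\theta^{n-1}(X)\otimes\eins_\qP)V^*$, which is of first-factor form.

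For both commutation statements one pulls the outer power of $\theta$, a $*$-homomorphism, out of the commutator to reduce matters to level one: for $m\le m'$ one has $[Y_m,Y_{m'}]=\theta^{m-1}\bigl([Y_1,\theta^{m'-m}(Y_1)]\bigr)$, and for $m\le n$ one has $[Y_m,X_n]=\theta^{m-1}\bigl([Y_1,\theta^{n-m+1}(X)]\bigr)$. In each case the operator in the second slot of the inner bracket has the form $\theta^k(\cdot)$ with $k\ge 1$, hence is of first-factor type, and therefore commutes with the second-factor $Y_1$ by the tensor commutation above. This delivers both commutation claims.

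For the measurement claim, write $\eta=V_\alpha\xi$ with $|\alpha|=n\ge m$, and split $V_\alpha\xi=(V_{\alpha_1}\cdots V_{\alpha_m})\,\xi''$ where $\xi'':=V_{\alpha_{m+1}}\cdots V_{\alpha_n}\xi\in\qH$. Unfolding the definition of $V^{(m)}$ on basis vectors yields
\[
V^{(m)}\bigl(\xi''\otimes\epsilon_{\alpha_m}\otimes\epsilon_{\alpha_{m-1}}\otimes\cdots\otimes\epsilon_{\alpha_1}\bigr)\;=\;V_{\alpha_1}\cdots V_{\alpha_m}\xi''\;=\;\eta.
\]
Applying $V^{(m)*}$ (unitary in the unital case) returns this tensor; the middle operator defining $Y_m$ then acts as $Y$ on the $\qP$-slot carrying $\epsilon_{\alpha_m}$ and multiplies by the eigenvalue $\alpha_m=k$; and a final $V^{(m)}$ restores $k\eta$. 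Hence $Y_m\eta=k\eta$, and since $Y$ (and therefore $Y_m$) is self-adjoint, the measurement of $Y_m$ in the state $\eta$ yields the outcome $k$ with probability one. The only real obstacle is bookkeeping: one must identify which tensor slot in $\qH\otimes\qP^{\otimes m}$ the letter $\alpha_m$ occupies under $V^{(m)*}\eta$, and align it with the slot where $Y$ is inserted in the definition of $Y_m$; once this correspondence is pinned down, every step reduces to a one-line application of $V^*V=\eins$ and $VV^*=\eins$.
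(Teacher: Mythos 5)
Your overall strategy for the commutation claims --- push everything down to level one using powers of the $*$-endomorphism $\theta$ and the ``tensor commutation'' $[\theta(A),\,V(\eins_\qH\otimes B)V^*]=0$, which indeed follows from $V^*V=\eins$ --- is sound, and it is essentially a reorganized version of the paper's argument (the paper instead lifts $Y_m$ and $X_n$ to a common level $n$ and observes that operators occupying disjoint tensor slots commute after conjugation by the unitary $V^{(n)}$). The problem is the identity $Y_m=\theta^{m-1}(Y_1)$ on which you base the reduction. With the stated definition, $Y$ sits in the \emph{last} slot of $\bigotimes^m_1\qP$, and the recursion $V^{(m)}=V\,(V^{(m-1)}\otimes\eins_\qP)$ gives
\[
Y_m \;=\; V\bigl(V^{(m-1)}V^{(m-1)*}\otimes Y\bigr)V^* \;=\; V(\eins_\qH\otimes Y)V^* \;=\; Y_1
\]
in the unital case --- not $\theta^{m-1}(Y_1)$. (The operator $\theta^{m-1}(Y_1)=V^{(m)}\,\eins_\qH\otimes Y\otimes\bigotimes^{m}_{2}\eins_\qP\,V^{(m)*}$ is the one with $Y$ in the \emph{first} slot, and it genuinely differs from $Y_1$ in general; check $d=2$ with $Y$ diagonal.) The commutation claims survive this correction, but trivially, and the discrepancy matters for the second half.

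For the measurement claim, the slot bookkeeping you defer as ``the only real obstacle'' is exactly where the content lies, and it does not align as you assert. Your own display correctly shows $V^{(m)*}\eta=\xi''\otimes\epsilon_{\alpha_m}\otimes\cdots\otimes\epsilon_{\alpha_1}$, i.e.\ the letter $\alpha_m$ occupies the \emph{first} $\qP$-slot and $\alpha_1$ the last; but the middle operator in the definition of $Y_m$ has $Y$ in the last slot, so the computation as you set it up yields $Y_m\eta=\alpha_1\eta$, not $\alpha_m\eta$. In short, the two halves of your argument silently use opposite slot conventions: the measurement conclusion needs $Y$ in the first slot (the reading under which your identity $Y_m=\theta^{m-1}(Y_1)$ would also be correct), while the stated definition puts it in the last. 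To be fair, the paper's own proof exhibits the same tension (its expansion $V^{(m)*}\eta=\sum_\beta V_\beta^*V_\alpha\xi\otimes\epsilon_\beta$ and the claim that only $\beta_m=k$ survives are off by a word reversal from the conventions $V_\alpha=V_{\alpha_1}\cdots V_{\alpha_n}$ and $V^{(n)}=V(V^{(n-1)}\otimes\eins_\qP)$), so the repair is to fix the convention once --- place $Y$ in the slot that $V^{(m)*}$ fills with the $m$-th letter --- after which your argument goes through. As written, though, the step you flagged and skipped is a genuine gap, not mere bookkeeping: it is precisely the point where the proposition is true or false.
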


\begin{proof}
For $1 \le m \le n$ we define
\[
Y_{m,n} := V^{(n)} \; \eins_\qH \otimes \bigotimes^{m-1}_1 \!
\eins_\qP \otimes Y \otimes \bigotimes^{n}_{m+1}\eins_\qP \; V^{(n)*}
\]
(empty tensor products omitted in the cases $m=1$ and $m=n$). 
Because the process is unital,
$V$ is a row unitary and in particular $V \, \eins_\qH \otimes \eins_\qP \, V^*=\eins_\qH$. It follows that
$Y_m = Y_{m,n}$ 
whenever $m \le n$. Writing $Y_m$ in this way it becomes obvious that it commutes with $X_n$ and $Y_n$.

Let $P_{\epsilon_k}$ be the orthogonal projection onto the one-dimensional subspace $\Cset \epsilon_k$ of $\qP$.
Then
\[
\langle \eta, V^{(m)} \; \eins_\qH \otimes \bigotimes^{m-1}_1 \!
\eins_\qP \otimes P_{\epsilon_k} \; V^{(m)*} \eta \rangle
=
\langle V^{(m)*} \eta, \eins_\qH \otimes \bigotimes^{m-1}_1 \!
\eins_\qP \otimes P_{\epsilon_k} \; V^{(m)*} \eta \rangle
\]
is the probability that a measurement of $Y_m$ yields the result $k$,
according to the standard rules of quantum mechanics. 
Now suppose that the process is prepared in a vector state with a unit vector $\eta$ in the range of $V_\alpha$, i.e., $\eta = V_\alpha \xi$ for some $\xi \in \qH$, so that the $m$-th letter in $\alpha$ is equal to $k$. Then from
\[
V^{(m)*} \eta = \sum_{|\beta|=m} V^*_\beta V_\alpha \xi \otimes \epsilon_\beta
\]
(with $\epsilon_\beta$ for $\beta = \beta_1 \ldots \beta_m$ is short for $\epsilon_{\beta_1}\otimes \ldots \epsilon_{\beta_m} \in  \bigotimes^{m}_1 \! \qP$)
we see that only the terms in the sum with $\beta_m = k$ can be non-zero and hence the application of $P_{\epsilon_k}$ at the $m$-th copy of $\qP$ always acts identical on all the non-zero terms. So the probability above is the squared length of a unit vector, i.e., it is equal to $1$.
\end{proof}

This result gives an operational meaning to the words $\alpha \in F^+_d$ by identifying them with measurement protocols for certain observables.
We can think of measuring $Y_1, \ldots, Y_n$ as performing a certain type $Y$ of measurement at the consecutive times $1, \ldots, n$ and the commutation properties proved above ensure that these measurements can be performed without perturbing the system (non-demolition measurements). For all $X \in \qB(\qH)$ it makes sense to 
consider $J^{(n)}(X) = X_n$ conditioned on $Y_1, \ldots, Y_n$ (conditioning in the sense of classical probability theory).
Such schemes and their application to quantum filtering and quantum control have been pioneered by Belavkin \cite{Be83,Be88} and this quantum filtering theory is a rapidly developing field of study. We mention the recent introductory surveys \cite{BHJ07,BHJ09} which give many references, the latter focusing on discretized models and containing constructions similar to the one above, see \cite{BHJ09}, Section 2.5. The quantum filtering equations for homodyne detections
or for photon counting described in Section 5 of \cite{BHJ09} refer to the measurement of observables of the type used in Proposition \ref{prop:operational}. Note that a different choice of the orthogonal basis in the Hilbert space $\qP$ corresponds to measurements of observables which do not commute with the original observables. So this choice of basis is part of the experimental set-up and different choices can only be realized in different experiments. We come back to this topic in Section 3, at the end of Section 6 and in an example in Section 7.

For a unital process  the Hilbert space $\qH$ is always the orthogonal sum of the ranges of the $V_\alpha$ for all $\alpha$ with a given length, so in principle the problem can be dealt with for an arbitrary vector state by decomposing the state vector with respect to such an orthogonal sum and then using Proposition \ref{prop:operational}. This may not always be the most practical path to follow for data given in a different way but it is of theoretical significance. 
It is of course interesting to prepare the system in other states where the outcomes are not deterministic but only statistical information can be obtained, see for example \cite{GS04} which uses discretization of continuous processes and thus can be directly connected with the approach here. 
The quantum filtering equations mentioned above provide recursive equations based on data from the Hamiltonian or the Lindblad generator of the quantum dynamical system.

We have established the operational meaning of the $\alpha$ in $V_\alpha$ in physical applications. In the following section we see further how an analysis of the recursive structure leads us naturally to the noncommutative Fornasini-Marchesini systems described in Section \ref{section:intro}

\section{Representations of Structure Maps}
\label{section:structure}

If for the multi-variable systems introduced in Section 1 
the spaces are Hilbert spaces and the linear maps are contractions between these Hilbert apaces then we want to think of them 
as appearing inside the weak Markov processes 
introduced in Section 2. 
The following definitions give a precise meaning to that. Then we justify the definitions by discussing how the represented structure maps can help us to understand the properties of the process. 
Recall our discussion of multi-variable systems in Section 1 
and the definition of the subspace $\qE$ in Proposition \ref{prop:wandering}.

\begin{Definition} \normalfont \label{def:structure}
A {\it representation of an input pair} $(A,B)$ (with column contractions $A: \qX \rightarrow \bigoplus^d_1 \qX$ and
$B: \qU \rightarrow \bigoplus^d_1 \qX$) is a process $(\qH, V, \fh)$ 
such that $\qX = \fh$ and $A = V^* |_{\fh}$, together with an isometry
$i_0: \qU \rightarrow \qE$ such that $B = V^*\, i_0$. We call $\qU_0 := i_0(\qU)$ the represented input space.

A {\it representation of an output pair} $(A,C)$ (with column contractions $A: \qX \rightarrow \bigoplus^d_1 \qX$ and 
$C: \qX \rightarrow \qY$) 
is a process $(\qH, V, \fh)$ 
such that $\qX = \fh$ and $A = V^* |_{\fh}$, together
with an isometry
$j_0: \qY \rightarrow \fh_1 = \fh \oplus \qE$ such that with the represented output space $\qY_0 := j_0(\qY)$ we have $C = j^*_0 P_{\qY_0} |_{\fh}$.

A {\it representation of the structure maps} $(A,B,C,D)$, 
where $D: \qU \rightarrow \qY$,
is given by representations of $(A,B)$ as an input pair and $(A,C)$ as an output pair as above with the same process $(\qH, V, \fh)$
such that $D = j^*_0 P_{\qY_0} |_{\qU_0} i_0$.
\end{Definition}

In the following we often suppress the isometries $i_0$ and $j_0$ and treat them as identifications whenever this simplifies the notation. 
Note that a represented input space $\,\qU_0$ is always wandering because by definition it is a subspace of the wandering subspace $\qE$, see Proposition \ref{prop:wandering}. 
A represented output space is in general not wandering; if it is we call it a wandering output space. 
An interesting example for a wandering output space is 
\[
\qE_* := \fh_1
\ominus V(\fh \otimes {\mathcal P})
= \qH \ominus V(\qH \otimes {\mathcal P}) = ker\, V^*\,.
\]
Indeed, this is the wandering subspace arising from the Wold decomposition of the row isometry $V$, see \cite{Po89a}.

Given any contractive block matrix of the form
\[
\left(
\begin{array}{cc}
A & B \\
C & D \\
\end{array}
\right) 
\colon
\left(
\begin{array}{c}
\fh \\
\qU \\
\end{array}
\right)
\rightarrow
\left(
\begin{array}{c}
\fh \otimes \qP \\
\qY \\
\end{array}
\right)
\]
we can use dilation theory to find a process so that $(A,B,C,D)$ is represented by it. In this case $\qU = \qE$ and $\qY = \qE_*$, compare \cite{Po89a,BV05}. This is also closely related to the realization theory of (noncommutative) Schur functions
\cite{BGM06}. Our definition \ref{def:structure} is rather general and does not always produce a contractive block matrix. 
In fact, given a process $(\qH, V, \fh)$, any subspace of
$\qE$ interpreted as a represented input space gives rise to a representation of an input pair
$(A,B)$ and any subspace of $\fh_1$ interpreted as a represented output space gives rise to a representation of an output pair
$(A,C)$ and together we have a representation of structure maps $(A,B,C,D)$. 
The additional flexibility can be useful in applications to processes and we show now that some important parts of the theory are still valid.

Let us write $\qU_\beta$ for $V_\beta \qU_0$ and $\qY_\alpha$
for $V_\alpha \qY_0$. Further we denote by $\qU_+$ the closed linear span of all $\qU_\alpha$ and by $\qY_+$ the closed linear span of all $\qY_\alpha$ (with $\alpha \in F^+_d$). With suitable identifications we can think of $\qU_+$ as an orthogonal direct sum of copies of the input space $\qU$. Similarly, if $\qY_0$ is wandering, then $\qY_+$ is an orthogonal direct sum of copies of the output space $\qY$. We sometimes refer to these copies as $\alpha$-translated input resp. output spaces. 

Representations of structure maps as above are always {\it causal} in the sense that $\qY_0 \perp V_\beta \qU_0 = \qU_\beta$ for all $\beta \in F_d^+$ with $|\beta| \ge 1$. More generally it follows further that $\qY_\alpha \perp \qU_\beta$  whenever $|\alpha| < |\beta|$ which we also refer to as causality. 

Note that from a representation with $A = (A_1, \ldots, A_d)^t$ we get a Kraus decomposition for the transition operator of the process, namely 
\[
Z(x) = A^* \, x\! \otimes\! \eins \, A = \sum^d_{k=1} A^*_k x A_k
\]
(limit in the strong operator topology if $d=\infty$). 

\begin{Proposition} \normalfont \label{prop:system}
Given a representation of structure maps $(A,B,C,D)$ by a process $(\qH, V, \fh)$ let $\tilde{\xi} = \xi \oplus \bigoplus V_\alpha \eta_\alpha$ be an element of $\qH$, with $\xi \in \fh$ and $\eta_\alpha \in \qU_0$.  Recall that $p = P_\fh$ denotes the orthogonal projection from $\qH$ to $\fh$.

If (for all words $\alpha$)
\begin{eqnarray*}
x(0) &:=& \xi \\
x(\alpha) &:=& p\, V^*_\alpha \tilde{\xi} 
\quad \big(\Rightarrow V_\alpha\, x(\alpha) 
= P_{V_\alpha \fh} \tilde{\xi} \;\big)\\
u(\alpha) &:=& \eta_\alpha 
\quad\quad\;\; \big(\Rightarrow V_\alpha\, u(\alpha) 
= P_{\qU_\alpha} \tilde{\xi} \;\big)
\end{eqnarray*}
then we have (for all words $\alpha$ and generators $k=1,\ldots,d$)
\[
x(\alpha k) = A_k \,x(\alpha) + B_k \,u(\alpha).
\]
If further
$y(\alpha) := P_{\qY_0} V^*_\alpha \tilde{\xi} \quad
\big(\Rightarrow V_\alpha\, y(\alpha) 
= P_{\qY_\alpha} \tilde{\xi} \;\big)$
 then we have
\[
y(\alpha) = C \,x(\alpha) + D \,u(\alpha).
\]
Hence we get a noncommutative Fornasini-Marchesini system
(compare Section 1). 
\end{Proposition}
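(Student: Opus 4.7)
The plan is to expand $V^*_\alpha \tilde{\xi}$ explicitly using the orthogonality properties of the row isometry, then read off the components in each piece of the orthogonal decomposition $\qH = \fh \oplus \bigoplus_\gamma V_\gamma \qE$ from Proposition \ref{prop:wandering}. One further application of $V^*_k$ then produces $x(\alpha k)$, while the output equation follows by projecting onto $\qY_0$.

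Iterating $V^*_j V_k = \delta_{jk} I_\qH$ along words gives, for any $\alpha, \beta \in F^+_d$, the three cases: $V^*_\alpha V_\beta = V_\gamma$ if $\beta = \alpha\gamma$, $\; V^*_\alpha V_\beta = V^*_\delta$ if $\alpha = \beta\delta$, and $V^*_\alpha V_\beta = 0$ otherwise. Applying this termwise to the orthogonal expansion of $\tilde{\xi}$ and separating the cases yields
\begin{equation*}
V^*_\alpha \tilde{\xi} \;=\; V^*_\alpha \xi \;+\; \sum_{\alpha = \beta\delta,\; \delta \neq 0} V^*_\delta \eta_\beta \;+\; \eta_\alpha \;+\; \sum_{\gamma \neq 0} V_\gamma \eta_{\alpha\gamma}.
\end{equation*}
The first two summands lie in $\fh$: $V^*_\alpha \xi$ by iterated co-invariance of $\fh$, and each $V^*_\delta \eta_\beta$ because $V^*_k$ maps $\qE \subset \fh_1 = \fh + \sum_k V_k \fh$ into $\fh$ (iterated along $\delta$). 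The summand $\eta_\alpha$ lies in $\qU_0 \subset \qE$, and each $V_\gamma \eta_{\alpha\gamma}$ lies in $V_\gamma \qE$, both orthogonal to $\fh$. Consequently $x(\alpha) = P_\fh V^*_\alpha \tilde{\xi}$ is exactly the sum of the first two pieces.

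Now apply one more $V^*_k$. The $\fh$-piece is carried by $V^*_k|_\fh = A_k$ to $A_k x(\alpha) \in \fh$. The piece $\eta_\alpha \in \qU_0$ is sent by $V^*_k|_{\qU_0}$ to $B_k u(\alpha) \in \fh$, using the input representation $B = V^*|_{\qU_0} i_0$ with the identification $\eta_\alpha = i_0(u(\alpha))$ suppressed. The terms $V_\gamma \eta_{\alpha\gamma}$ with $\gamma \neq 0$ survive only when $\gamma = k\gamma'$, contributing $V_{\gamma'} \eta_{\alpha k \gamma'}$; all of these live in $\bigoplus_{\gamma'} V_{\gamma'} \qE$ and are orthogonal to $\fh$. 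Hence $P_\fh V^*_k V^*_\alpha \tilde{\xi} = A_k x(\alpha) + B_k u(\alpha)$, which is $x(\alpha k)$ since $V^*_{\alpha k} = V^*_k V^*_\alpha$.

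For the output equation, observe that $\qY_0 \subset \fh_1 = \fh \oplus \qE$ is wandering and therefore orthogonal to $V_\gamma \qE$ for every $\gamma \neq 0$, so $P_{\qY_0}$ annihilates the last sum in the expansion of $V^*_\alpha \tilde{\xi}$. What remains is $P_{\qY_0} x(\alpha) + P_{\qY_0} \eta_\alpha$, which after suppressing the identification $j_0$ is precisely $C x(\alpha) + D u(\alpha)$ in view of $C = j^*_0 P_{\qY_0}|_\fh$ and $D = j^*_0 P_{\qY_0}|_{\qU_0} i_0$. The only real obstacle is keeping the prefix/extension cases straight in the expansion of $V^*_\alpha \tilde{\xi}$; once that expansion is clean, each subsequent step is a direct reading off from the defining properties of the representation.
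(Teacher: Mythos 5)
Your proof is correct and follows essentially the same route as the paper's: both arguments rest on the orthogonal decomposition $\qH = \fh \oplus \bigoplus_\alpha V_\alpha \qE$ from Proposition \ref{prop:wandering} and on sorting the prefix/extension cases of $V^*_\alpha V_\beta$; the paper merely compresses your termwise expansion into the single insertion $P_\fh V^*_k V^*_\alpha \tilde\xi = P_\fh V^*_k P_{\fh\oplus\qU_0}V^*_\alpha\tilde\xi$ justified by causality. One small correction in your output step: the orthogonality $\qY_0 \perp V_\gamma\qE$ for $\gamma\neq 0$ does not follow from $\qY_0$ being wandering (that only gives $\qY_0 \perp V_\gamma\qY_0$); it follows from the containment $\qY_0 \subset \fh_1 = \fh\oplus\qE$ together with the fact that $\qE$ is wandering and $V_\gamma\qE\perp\fh$. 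Indeed the paper remarks after the proof that the wandering property of $\qY_0$ is never used here, only causality.
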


\begin{proof}
The first assertion follows from
\begin{eqnarray*}
x(\alpha k) &=& p\, V^*_{\alpha k} \tilde{\xi} 
= p\, V^*_k V^*_\alpha \tilde{\xi} 
= p\, V^*_k P_{\fh \oplus \qU_0} V^*_\alpha \tilde{\xi}
\\
&=& V^*_k p\, V^*_\alpha \tilde{\xi}
+ V^*_k \eta_\alpha
= A_k \,x(\alpha) + B_k \,u(\alpha).
\end{eqnarray*}
Using causality we find that
\begin{eqnarray*}
y(\alpha) &=& P_{\qY_0} V^*_\alpha \tilde{\xi} 
= P_{\qY_0} P_{\fh \oplus \qU_0} V^*_\alpha \tilde{\xi} \\
&=& P_{\qY_0} \big[ x(\alpha) + u(\alpha) \big]
= C x(\alpha) + D u(\alpha).
\end{eqnarray*}
\end{proof}



Continuing the discussion about the operational meaning of 
$\alpha \in F_d^+$ in the study of quantum dynamical systems at the end of Section 2, how can we interpret the noncommutative Fornasini-Marchesini system established in Proposition 
\ref{prop:system} ? Let $x \in \qB(\fh)$ which we interpret
as an operator $x p \in \qB(\qH)$.
Suppose that at time $0$ we have an initial state given
by a unit vector $\tilde{\xi} \in \qH$ as described in Proposition \ref{prop:system}. Then at time $n$ we find
\[
\langle \tilde{\xi}, J^{(n)}(x p) \,\tilde{\xi} \rangle 
= \sum_{|\beta|=n} \langle V^*_\beta \tilde{\xi}, x p \,V^*_\beta \tilde{\xi} \rangle\,.
\]
Now suppose further that at times $1, \ldots, n$ we performed a measurement of the observables $Y_1, \ldots, Y_n$ described in Proposition \ref{prop:operational} and obtained the results 
$\alpha = (\alpha_1, \ldots, \alpha_n)$. Then except for the one summand with $\beta = \alpha$ all the other summands in the sum above are inconsistent with the observations. According to the rules of quantum mechanics all
the other summands have to be removed and the one remaining normalized. Because $x p = p x p$ it follows that
$x(\alpha) = p V^*_\alpha \tilde{\xi}$ is an unnormalized state vector in $\fh$ which describes the state of $\qB(\fh)$ at time $n$ conditioned by $Y_1=\alpha_1, \ldots, Y_n = \alpha_n$. The first equation $x(\alpha k) = A_k \,x(\alpha) + B_k \,u(\alpha)$ of the noncommutative Fornasini-Marchesini system gives a recursion for these $x(\alpha)$ and should be compared with quantum filtering equations for conditional states as described in \cite{BGM06}.

To get an interpretation for the second equation 
$y(\alpha k) = C \,x(\alpha) + D \,u(\alpha)$ of the noncommutative Fornasini-Marchesini system we may assume
that as before we are interested in the system described by $\fh$ but it is not directly accessible. However we have access to the represented output space and use the $y(\alpha)$ instead of the $x(\alpha)$ as a resource of indirect information about $\fh$ which is available to us. To prepare a more detailed analysis in the following we work out some additional tools. Some specific situations of this type appear in sections 6 and 7. 

Let us first work out
the observability map $\qO_{C,A}$ and the transfer function $\qT$, already introduced in Section 1,
for represented structure maps. To simplify formulas let us for the moment identify $\qY_\alpha$ and $\qY$, denoted
$\qY_\alpha \simeq_\alpha \qY$. Then
\[
P _{\qY_\alpha} |_\fh = V_\alpha P_{\qY_0} V^*_\alpha |_\fh
\simeq_\alpha C A^\alpha
\]
in other words, we obtain the $\alpha$-entry $C A^\alpha$ of the observability map $\qO_{C,A}$ by projecting the represention $\fh$ of the internal space $\qX$ to the $\alpha$-translated output space $\qY_\alpha$. 

Similarly with $\qY_\alpha \simeq_\alpha \qY$ and 
$\qU_0 \simeq \qU$ we obtain
\[
P _{\qY_\alpha} |_{\qU_0} = V_\alpha P_{\qY_0} V^*_\alpha |_{\qU_0}
\simeq_\alpha
\left\{
\begin{array}{cc}
D & \text{if}\; \alpha = 0 \\ 
C\, B_\alpha & \text{if}\; |\alpha| = 1 \\
C\, A_{\alpha_r} \ldots A_{\alpha_2}\,B_{\alpha_1} & \text{if}\;  \alpha = \alpha_1 \ldots \alpha_r,\, r = |\alpha| \ge 2
\end{array} 
\right.
\]
In other words, we obtain the $\alpha$-coefficient $\qT^\alpha$ in the formal power series expansion of the transfer function $\qT$ 
(already introduced in Section 1)
by projecting the represention $\qU_0$ of the input space $\qU$ to 
the $\alpha$-translated output space $\qY_\alpha$. Causality implies that further,
with identifications 
$\qY_\alpha \simeq_\alpha \qY$ and 
$\qU_\beta \simeq_\beta \qU$,
\[
P _{\qY_\alpha} |_{\qU_\beta} \simeq_{\alpha,\beta}
\left\{
\begin{array}{cc}
\qT^\sigma & \text{if}\; \alpha = \beta \sigma \\ 
0 & \text{otherwise}
\end{array}
\right.
\]
This pattern in the operator-valued kernel 
$\big( P _{\qY_\alpha} |_{\qU_\beta} \big)_{\alpha, \beta \in F^+_d}$ describes what is called a multi-analytic kernel. 
If $\qY_0$ is wandering then this operator-valued matrix corresponds to the contraction $P_{\qY_+} |_{\qU_+}$, for the orthogonal decompositions with respect to the translated output resp. input spaces, and it intertwines the row shifts on $\bigoplus_\beta \qU_\beta$
and $\bigoplus_\alpha \qY_\alpha$ which are obtained by restricting the row isometry $V$.
This is the defining property of a multi-analytic operator and we have verified it for the contraction $P_{\qY_+} |_{\qU_+}$. Multi-analytic operators
have been introduced and studied by Popescu \cite{Po89b,Po95}, the situation above involving transfer functions for pairs of wandering subspaces is worked out in more detail in \cite{Go12}. 

For $d=1$ multiplication with the transfer function is multiplication by an ordinary analytic function. But in a similar way for all $d \ge 1$, for a noncommutative Fornasini-Marchesini system, we can introduce the noncommutative $Z-$transforms
of the input string $\big(u(\beta)\big)$ and of the output string $\big(y(\alpha)\big)$ (see Proposition \ref{prop:system}) as formal power series
\begin{eqnarray*}
\hat{u}(z) = \hat{u}(z_1, \ldots, z_d) = \sum_\beta u(\beta) z^\beta \\
\hat{y}(z) = \hat{y}(z_1, \ldots, z_d) = \sum_\alpha y(\alpha) z^\alpha
\end{eqnarray*}
and then, with the initial condition $x(0)=0$, they are related via multiplication by the transfer function $\qT$:
\[
\hat{y}(z) = \qT(z)\,\hat{u}(z)\,.
\]
To verify the formula recall the convention
$z^\alpha = z_{\alpha_r} \ldots z_{\alpha_1}$
for $\alpha = \alpha_1 \ldots \alpha_r$ which
gives the multiplication rule 
$z^\sigma z^\beta = z^{\beta \sigma}$.

This leads to the term {\it transfer function} in control theory.
Similar connections between system trajectories and the dynamics of the ambient system have been worked out in \cite{BV05}, see also \cite{BSV05} for a commutative polydisk setting and \cite{BCU08} for a one-variable continuous time setting.  


\section{Categories of Processes and $\gamma$-Extensions}
\label{section:cat}

In this section we introduce additional concepts with the intention to describe substructures of processes. 
We say that $(\qG, V^\qG, \fg)$ is a {\it subprocess} of the process
$(\qH, V, \fh)$ if $\fg$ is a closed subspace of $\fh$ which is co-invariant for $V$ and $V^\qG = V |_{\qG}$ where $\qG = \overline{span} \{V_\alpha \fg \colon \alpha \in F_d^+ \}$. Note that $\fg$ is also co-invariant for $V^\qG$ and $(\qG, V^\qG, \fg)$ is a process in its own right. 

Given a subprocess $(\qG, V^\qG, \fg)$ of a process $(\qH, V, \fh)$
we can form the {\it quotient process} 
\[
(\qH, V, \fh)/(\qG, V^\qG, \fg) := (\qK, V^\qK, \fk)
\]
where $\fk := \fh \ominus \fg,\; \qK := \overline{span} \{V_\alpha \fk \colon \alpha \in F_d^+ \},\; V^\qK 
:= V |_{\qK}$. Let us check that 
$(\qK, V^\qK, \fk)$ is a process. Indeed, because $\fg$ is co-invariant
for $V$ we see that $\qK$ is contained in $\qH \ominus \fg$. Hence
$\qK \ominus \fk$ is contained in $\qH \ominus \fh$ which is a $V$-invariant subspace orthogonal to $\fk$. Hence $\fk$ is co-invariant for $V^\qK$ which proves our claim. Note that in general $\fk$ is not co-invariant for $V$. 
Considering adjoints we see that $V^{\qG*} =
V^* |_\qG$ but only $V^{\qK*} =
(P_\qK \otimes \eins_{\mathcal P}) V^* |_\qK$. So we need to distinguish carefully between subprocesses and quotient processes.

It is convenient to reformulate these concepts within a {\it category of processes} which we define now. The objects of the category are the processes with a common multiplicity space ${\mathcal P}$. A {\it morphism} from $(\qR, V^\qR, \fr)$ to $(\qS, V^\qS, \fs)$ is a contraction $t: \fr \rightarrow \fs$ which intertwines the adjoints of the row isometries, i.e. 
\[
V^{\qS*}  t = (t \otimes \eins_{\mathcal P}) V^{\qR*} |_{\fr}\,.
\]
(or written differently: $A^\qS_k\, t = t\, A^\qR_k$ for $k=1,\ldots,d$ and $A^\qS_k = V^{\qS*}_k |_\fs,\; A^\qR_k = V^{\qR*}_k |_\fr$). 
Composition of morphisms is given by composition of operators,
the identity morphism is given by the identity operator.

We add some immediate observations justifying this definition. First note that a morphism $t$ is an isomorphism if and only if $t$ is unitary and we get a reasonable meaning for two processes to be isomorphic. In the following we often identify isomorphic processes.

Also note that if we have a morphism given by a contraction 
$t: \fr \rightarrow \fs$
then $t$ can always be extended to a contraction
$T: \qR \rightarrow \qS$ such that $T |_{\fr} = t$ and
$\| T \| = \| t \|$ and
\[
V^{\qS*}  T = (T \otimes \eins_{\mathcal P}) V^{\qR*}\,,
\]
this is nothing but the commutant lifting theorem in the version of Popescu 
\cite{Po92}.
We call $T$ an {\it extended morphism} associated to $t$. We can also think of a morphism as the class of all extended morphisms associated to $t$, this attaches a global interpretation to it. 

Finally it follows from 
$V^{\qS*}  t = (t \otimes \eins_{\mathcal P}) V^{\qR*} |_{\fr}$
that $t (\fr)$ is co-invariant for $V^\qS$, and hence from a morphism $t$ from $(\qR, V^\qR, \fr)$ to $(\qS, V^\qS, \fs)$, by defining
$\fg$ to be the closure of $t (\fr)$ and $V^\qG := V^\qS |_{\qG}$, we always obtain a subprocess $(\qG, V^\qG, \fg)$ of $(\qS, V^\qS, \fs)$.

In fact, we can reformulate subprocesses and quotient processes in terms of morphisms as follows. If $(\qG, V^\qG, \fg)$ is a subprocess of a process $(\qH, V, \fh)$ then $\eins_\fg$, interpreted as an embedding of $\fg$ into $\fh$, is an isometric morphism:
\[
V^{*}  \eins_\fg = (\eins_\fg \otimes \eins_\qP) V^{\qG*} |_\fg
\] 
Conversely, given an isometric morphism $t$ from a process $(\qG, V^\qG, \fg)$ to a process $(\qH, V, \fh)$ we can interpret $t$ as an embedding $\eins_\fg$ and the properties of a morphism ensure that $\fg$ is co-invariant for $V$. With an extended morphism $\eins_\qG$, interpreted as an embedding of $\qG$ into $\qH$, we can also arrange that $V^\qG = V |_\qG$ (by uniqueness of a minimal isometric dilation up to unitary equivalence). So we get a subprocess.

For the corresponding quotient process $(\qK, V^\qK, \fk)$ we can check that the orthogonal projection $P_\fk: \fh \rightarrow \fk$ is a coisometric morphism:
\[
V^{\qK*}  P_\fk = (P_\fk \otimes \eins_\qP) V^{*} |_\fh
\] 
Conversely, if we have a coisometric morphism $t$ from $(\qH, V, \fh)$ to a process $(\qK, V^\qK, \fk)$ then we can interpret $t$ as an
orthogonal projection $P_\fk: \fh \rightarrow \fk$, then use the properties of a morphism to check that $\fg := \fh \ominus \fk$ is
co-invariant for $V$, giving rise to a subprocess so that the corresponding quotient process is the process $(\qK, V^\qK, \fk)$
we started from.
Note that $P_\fg$ and $\eins_\fk$ are not morphisms, in general.

The image of $\eins_\fg$ is equal to the kernel of $P_\fk$ and we can proceed to give a concise description of the situation as a short exact sequence of processes:
\[
\xymatrix{
0 \ar[r] & (\qG, V^\qG, \fg) \ar[r]^{\eins_\fg}  &
(\qH, V, \fh) \ar[r]^{P_\fk}  &
(\qK, V^\qK, \fk) \ar[r]  & 0
}
\]
Here $0$ stands for the process on the $0$-dimensional space which takes the role of a zero object in our category. In the following we suppress the embeddings in the notation whenever this is convenient.

\begin{Lemma}\normalfont \label{lem:gamma}
If a short exact sequence of processes is given as follows
\[
\xymatrix{
0 \ar[r] & (\qG, V^\qG, \fg) \ar[r]^{\eins_\fg}  &
(\qH, V, \fh) \ar[r]^{P_\fk}  &
(\qK, V^\qK, \fk) \ar[r]  & 0
}
\]
then the relative position of the wandering subspaces
\[
\qE^\fg := \overline{span} (\fg, V^\qG(\fg \otimes {\mathcal P})) \ominus \fg\,,
\]
\[
\qE^\fk_* := \overline{span} (\fk, V^\qK(\fk \otimes {\mathcal P})) \ominus V^\qK(\fk \otimes {\mathcal P})
\]
can be described by
\[
P_{\qG} |_{\qE^\fk_*} = P_{\qE^\fg} |_{\qE^\fk_*}, \quad
P_{\qK} |_{\qE^\fg} =  P_{\qE^\fk_*} |_{\qE^\fg}  = \big(P_{\qE^\fg} |_{\qE^\fk_*}\big)^* .
\]
More general, for all $\alpha \in F^+_d$
\begin{eqnarray*}
P_{\qG} |_{V_\alpha \qE^\fk_*} &=& P_{V_\alpha \qE^\fg} |_{V_\alpha \qE^\fk_*}
=V_\alpha\, P_{\qE^\fg} |_{\qE^\fk_*}\, V^*_\alpha |_ {V_\alpha \qE^\fk_*} \\
P_{\qK} |_{V_\alpha \qE^\fg} &=& P_{V_\alpha \qE^\fk_*} |_{V_\alpha \qE^\fg}
= V_\alpha\, P_{\qE^\fk_*} |_{\qE^\fg}\, V^*_\alpha |_ {V_\alpha \qE^\fg}.
\end{eqnarray*}
\end{Lemma}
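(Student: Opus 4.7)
The plan is to base the whole proof on one structural observation: \emph{for $\xi \in \qE^\fk_*$ one has $V^*\xi \in \fg \otimes \qP$}, and hence $V_\alpha^*\xi \in \fg$ for every $\alpha$ with $|\alpha|\ge 1$. Indeed, $\qE^\fk_* \subset \fk + V(\fk \otimes \qP) \subset \fh_1$, so coinvariance of $\fh$ (Lemma~\ref{lem:co}) puts $V^*\xi$ in $\fh \otimes \qP = (\fg \otimes \qP) \oplus (\fk \otimes \qP)$, while the defining condition $\xi \perp V(\fk \otimes \qP)$ gives $V^*\xi \perp \fk \otimes \qP$; combining, $V^*\xi \in \fg \otimes \qP$. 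Iterated coinvariance of $\fg$ under the $V_k^*$ then yields $V_\alpha^*\xi \in \fg$ for $|\alpha|\ge 1$. The analogous statement $V^*\zeta \in \fg \otimes \qP$ and $V_\alpha^*\zeta \in \fg$ for $\zeta \in \qE^\fg$ and $|\alpha|\ge 1$ is immediate from $\zeta \in \fg + V(\fg \otimes \qP)$ and coinvariance of $\fg$.

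Next I would apply the wandering decomposition $\qG = \fg \oplus \bigoplus_\alpha V_\alpha \qE^\fg$ from Proposition~\ref{prop:wandering} for the subprocess. For $\xi \in \qE^\fk_*$ and $f \in \fg$: writing $\xi = \xi_0 + V\phi$ with $\xi_0 \in \fk$ and $\phi \in \fk\otimes\qP$, one gets $\langle \xi, f\rangle = \langle \phi, V^*f\rangle = 0$ since $V^*f \in \fg\otimes\qP \perp \fk\otimes\qP$, so $P_\fg \xi = 0$. For $|\alpha|\ge 1$ and $\zeta\in \qE^\fg$,
\[
\langle \xi, V_\alpha \zeta \rangle = \langle V_\alpha^*\xi, \zeta\rangle = 0
\]
because $V_\alpha^*\xi \in \fg$ while $\zeta \perp \fg$. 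Thus only the $\qE^\fg$-component of $P_\qG\xi$ survives, giving $P_\qG\xi = P_{\qE^\fg}\xi$. The dual identity $P_\qK|_{\qE^\fg} = P_{\qE^\fk_*}|_{\qE^\fg}$ follows by the symmetric computation, expanding $P_\qK\zeta$ in the wandering decomposition of $\qK$ provided by Proposition~\ref{prop:wandering} for the quotient process and using $V^*\zeta \in \fg\otimes\qP$ together with $V_\alpha^*\zeta \in \fg$ to show both $P_\qK\zeta \perp V(\fk\otimes\qP)$ and vanishing of components on the higher wandering summands. The equality $P_{\qE^\fk_*}|_{\qE^\fg} = (P_{\qE^\fg}|_{\qE^\fk_*})^*$ is the standard adjoint relation $(P_M|_N)^* = P_N|_M$ for restrictions of orthogonal projections.

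For the general $\alpha$ case no new ideas are needed. Expanding $P_\qG(V_\alpha\xi)$ in the same wandering decomposition, each summand $V_\beta \qE^\fg$ contributes $V_\beta P_{\qE^\fg} V_\beta^* V_\alpha \xi$, and $V_\beta^* V_\alpha$ vanishes unless one of $\alpha,\beta$ is a prefix of the other, in which case it reduces to $\eins$, $V_\gamma$, or $V_\delta^*$. In every pairing except $\beta=\alpha$, either $V_\gamma^*\zeta \in \fg$ pairs against $\xi \perp \fg$ or $V_\delta^*\xi \in \fg$ pairs against $\zeta \perp \fg$, killing the contribution; only $\beta=\alpha$ survives, and with $P_{V_\alpha\qE^\fg} = V_\alpha P_{\qE^\fg} V_\alpha^*$ this gives the stated formula. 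The dual general identity is symmetric. The one delicate point I anticipate is remembering that $\fk$ is \emph{not} coinvariant under $V$, so one must resist moving $V^*$ freely past $\fk$-vectors; the observation $V^*\xi \in \fg\otimes\qP$ for $\xi \in \qE^\fk_*$ is precisely the compensating structural fact that reduces all remaining arguments to routine orthogonality bookkeeping.
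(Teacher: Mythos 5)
Your key structural observation --- that $V^*\xi \in \fg \otimes \qP$ for $\xi \in \qE^\fk_*$ and $V^*\zeta \in \fg \otimes \qP$ for $\zeta \in \qE^\fg$, hence $V^*_\alpha$ sends both spaces into $\fg$ for $|\alpha|\ge 1$ --- is exactly the paper's starting point, and your treatment of the two base identities and of the first general identity is sound: the expansion of $P_\qG(V_\alpha \xi)$ along $\qG = \fg \oplus \bigoplus_\beta V_\beta \qE^\fg$ works precisely because Proposition \ref{prop:wandering} makes this a \emph{complete} orthogonal decomposition of $\qG$.

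The gap is the sentence ``the dual general identity is symmetric.'' It is not, because $\qE^\fg$ and $\qE^\fk_*$ play structurally different roles: the translates of $\qE^\fk_*$ together with $\fk$ do \emph{not} exhaust $\qK$. Rather, $\qE^\fk_* = \ker V^{\qK*}$ is the Wold wandering subspace of $V^\qK$, so $\qK = \bigoplus_\beta V_\beta \qE^\fk_* \oplus \qK_{res}$ with a possibly nontrivial residual summand $\qK_{res}$ on which $V^\qK$ restricts to a row unitary (in the extreme case of a unital quotient process one has $\qE^\fk_* = \{0\}$ and $\qK_{res} = \qK$). Your prefix bookkeeping gives $V_\alpha \qE^\fg \perp V_\beta \qE^\fk_*$ for $\beta \neq \alpha$, but says nothing about the component of $V_\alpha \zeta$ in $\qK_{res}$; and the decomposition $\qK = \fk \oplus \bigoplus_\beta V_\beta \qE^\fk$ that you used for the base case does not transport to general $\alpha$: for $\alpha = \beta j$ the cross term $\langle V_\alpha \zeta, V_\beta e\rangle = \langle V_j P_\fk \zeta, e\rangle$ with $e \in \qE^\fk$ need not vanish, since $\zeta \in \qE^\fg$ can have a nonzero $\fk$-component and $\fk$ is not coinvariant. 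The missing step is to prove $V_\alpha \qE^\fg \perp \qK_{res}$. The paper does this by observing that $V^\qK|_{\qK_{res}}$ being a row unitary forces $V^* \qK_{res} \subset \qK_{res}\otimes \qP$, so $\langle V_\alpha \zeta, r\rangle = \langle \zeta, V^*_\alpha r\rangle$ with $V^*_\alpha r \in \qK_{res}$ reduces the claim to the already established case $\alpha = 0$, which gives $\qE^\fg \perp \qK \ominus \qE^\fk_* \supset \qK_{res}$. (Alternatively one can test $V_\alpha\zeta - V_\alpha P_{\qE^\fk_*}\zeta$ against the total set $\{V_\beta k \colon k \in \fk\}$ of $\qK$, using coinvariance of $\fh$ and the base case.) Either way, an extra argument is required where your proof asserts symmetry.
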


\begin{proof}
We have $\qE^\fk_* \subset \fh \oplus \qE$ where
$\qE := \overline{span} (\fh, V (\fh \otimes {\mathcal P})) \ominus \fh$. Because $V^* \,\fh \oplus \qE = \fh \otimes \qP$ we conclude that $V^* \qE^\fk_* \subset \fh \otimes \qP$. But by the definition of $\qE^\fk_*$ we also have
\[
(P_\qK \otimes \eins_{\mathcal P}) V^* \qE^\fk_* = V^{\qK*} \qE^\fk_* = \{0\}
\]
and it follows that $V^* \qE^\fk_* \subset \fg \otimes \qP$.
Hence if $\xi \in \qE^\fg,\, \eta \in \qE^\fk_*$ and $\alpha \not=0$
then $V^*_\alpha \eta \subset \fg$ and
\[
\langle V_\alpha \xi, \eta \rangle = \langle \xi, V^*_\alpha \eta \rangle = 0
\]
and it follows that $\qE^\fk_* \perp V_\alpha \qE^\fg$ for all $\alpha \not=0$. Obviously also $\qE^\fk_* \perp \fg$ and so
$
P_{\qE^\fg} |_{\qE^\fk_*} = P_{\qG} |_{\qE^\fk_*}
$
which is the first formula we intended to prove. 

It is clear that
$\big(P_{\qE^\fg} |_{\qE^\fk_*}\big)^* = P_{\qE^\fk_*} |_{\qE^\fg}$. To get the equality $P_{\qK} |_{\qE^\fg} =  P_{\qE^\fk_*} |_{\qE^\fg}$ we argue as follows.
Consider $\qE^\fk := \overline{span} (\fk, V (\fk \otimes {\mathcal P})) \ominus \fk$. 
Because $\qE^\fk \perp \fh$ it follows that $\qE^\fk \subset \qE$ and because $\qE$ is wandering for $V$ we conclude
that $V_\alpha \qE^\fk \perp 
\fh \oplus \qE$ if $\alpha \not=0$ and hence
\[
P_\qK \big(\fh \oplus \qE \big)
= \fk \oplus \qE^\fk\,.
\]
Now, because $\qE^\fg \subset  \fh \oplus \qE$, we can compute
\[
P_\qK |_{\qE^\fg} = P_\qK P_{\fh \oplus \qE} |_{\qE^\fg}
= P_{\fk \oplus \qE^\fk} |_{\qE^\fg}
= P_{\qE^\fk_*} |_{\qE^\fg}\,.
\]
For the last equality above note that 
$\qE^\fk_* = \big(\fk \oplus \qE^\fk \big)
\ominus V (\fk \otimes {\mathcal P})$ but
$
\qE^\fg \subset 
\overline{span} \{ g, V (\fg \otimes {\mathcal P}) \}
\perp V (\fk \otimes {\mathcal P})
$.
\\
\\
Because $\qE^\fg$ and $\qE^\fk_*$ are wandering it is now immediate that
\[
V_\alpha \qE^\fg \perp V_\beta \qE^\fk_* \quad \text{if} \; \alpha \not= \beta\,.
\]
To get the first line of the general formulas we only have to note additionally that 
always $V_\beta \qE^\fk_* \perp \fg$. To get the second line we have to prove that additionally
\[
V_\alpha \qE^\fg \;\perp\; \qK \ominus \bigoplus_{\beta \in F^+_d} V_\beta \qE^\fk_* =: \qK_{res} \;\;\text{(residual part)}.
\]
Indeed, if $\alpha=0$ this follows from $P_{\qK} |_{\qE^\fg} =  P_{\qE^\fk_*} |_{\qE^\fg}$ shown above. Then for other $\alpha$ reduce it to the previous case by using the fact that $V^\qK |_{\qK_{res}}$ is a row unitary and hence
$V^* \qK_{res} = V^{\qK*} \qK_{res} \subset \qK_{res} \otimes \qP$.
\end{proof}

These observations suggest the following construction which allows us 
to classify the processes which can be obtained by short exact sequences, given the subprocess and the quotient process.

\begin{Definition}\normalfont \label{def:extension}
Given processes $(\qG, V^\qG, \fg)$ and $(\qK, V^\qK, \fk)$ (with a common multiplicity space ${\mathcal P}$) and any contraction $\gamma: \qE^\fk_* \rightarrow \qE^\fg$ we define the $\gamma$-{\it extension} 
\[
(\qG, V^\qG, \fg) \oplus_\gamma (\qK, V^\qK, \fk) := (\qH, V, \fh)
\]
where
\begin{eqnarray*}
\fh &:=& \fg \oplus \fk, \\
\qH &:=& \fg \oplus \qK \oplus \bigoplus_{\alpha\in F_d^+}  ({\mathcal D}_{\gamma^*})_\alpha\,. \\
V &:=& 
\left\{ 
\begin{array}{cl}
\big(
\eins_\fg \oplus 
\left(
\begin{array}{c}
\gamma^* \\
D_{\gamma^*} \\
\end{array}
\right)
\big)
\, V^\qG
 & \text{on}\; \fg \\
\quad\quad\quad\quad \quad\quad\quad
V^\qK & \text{on}\; \qK \\ 
\text{canonical row shift}
 & \text{on}\; \bigoplus_{\alpha\in F_d^+}  ({\mathcal D}_{\gamma^*})_\alpha \\
\end{array}
\right.
\end{eqnarray*}
\end{Definition}

We add the following explanations for this definition. $D_{\gamma^*} := \sqrt{\eins - \gamma \gamma^*}$ is the defect operator for $\gamma^*$ and ${\mathcal D}_{\gamma^*}$, the closure of its range, is the defect space. Then $\bigoplus_{\alpha\in F_d^+}  ({\mathcal D}_{\gamma^*})_\alpha$ is the orthogonal sum of a family of copies of ${\mathcal D}_{\gamma^*}$ indexed by the free semigroup $F_d^+$, so what we mean by the canonical row shift on this space is
just moving elements between these copies. Further, to explain the action of $V$ on $\fg$, note that $V^\qG$ maps $\fg$ into $\fg \oplus \qE^\fg$ and now $\eins_{\fg}$ acts as identity on $\fg$, the contraction $\gamma^*$ maps 
$\qE^\fg$ into $\qE^\fk_* \subset \qK$ and $D_{\gamma^*}$ maps
$\qE^\fg$ into ${\mathcal D}_{\gamma^*}$ which we interpret as
$({\mathcal D}_{\gamma^*})_0 \subset  \bigoplus_{\alpha\in F_d^+}  ({\mathcal D}_{\gamma^*})_\alpha$. Written explicitly as an operator matrix with respect to the direct sum
$\qH = \fg \oplus \qK \oplus \bigoplus_{\alpha\in F_d^+}  ({\mathcal D}_{\gamma^*})_\alpha$ we have
\[
V = 
\left(
\begin{array}{ccc} 
A^{\qG*} & 0 & 0 \\
\gamma^* D_{A^{\qG*}} & V^\qK & 0 \\
D_{\gamma^*} D_{A^{\qG*}} & 0 & \text{row shift} \\
\end{array}
\right).
\]

Note that for $\gamma = 0$ the $\gamma$-extension is nothing but the direct sum of the two processes $(\qG, V^\qG, \fg)$ and $(\qK, V^\qK, \fk)$. If $(\qK, V^\qK, \fk)$ is unital and hence
$\qE^\fk_* = \{0\}$ then this is the only possibility. 

\begin{Theorem}\normalfont \label{thm:extension-p}
Given processes $(\qG, V^\qG, \fg)$ and $(\qK, V^\qK, \fk)$ (with a common multiplicity space ${\mathcal P}$)
and a contraction $\gamma: \qE^\fk_* \rightarrow \qE^\fg$. Then
the $\gamma$-extension
\[
(\qG, V^\qG, \fg) \oplus_\gamma (\qK, V^\qK, \fk) 
\]
is a process,
\begin{eqnarray*}
\gamma &=& P_{\qE^\fg} |_{\qE^\fk_*} = P_{\qG} |_{\qE^\fk_*} \\
\gamma^* &=& P_{\qE^\fk_*} |_{\qE^\fg} = P_\qK |_{\qE^\fg}
\end{eqnarray*}
and we have a short exact sequence
\[
0 \rightarrow (\qG, V^\qG, \fg) 
\stackrel{\eins_{\fg}}{\rightarrow} 
(\qG, V^\qG, \fg) \oplus_\gamma (\qK, V^\qK, \fk)
\stackrel{P_\fk}{\rightarrow} (\qK, V^\qK, \fk) \rightarrow 0\;.
\]
In other words, $(\qG, V^\qG, \fg)$
is a subprocess and $(\qK, V^\qK, \fk)$ is a quotient process of the $\gamma$-extension.
\\
Conversely, in a short exact sequence
\[
\xymatrix{
0 \ar[r] & (\qG, V^\qG, \fg) \ar[r]^{\eins'_\fg}  &
(\qH', V', \fh') \ar[r]^{P'_\fk}  &
(\qK, V^\qK, \fk) \ar[r]  & 0
}\,,
\]
where $\eins'_{\fg}$ resp. $P'_\fk$ are an isometric resp. a coisometric 
morphism,
the process $(\qH', V', \fh')$ is isomorphic to a  $\gamma$-extension for a contraction $\gamma: \qE^\fk_* \rightarrow \qE^\fg$ such that moreover the corresponding extensions are equivalent in the sense that the following diagram commutes:
\[
\xymatrix{
0 \ar[r] 
& 
(\qG, V^\qG, \fg) \ar[r]^{\eins'_\fg} & (\qH', V', \fh') \ar[r]^{P'_\fk} 
&
(\qK, V^\qK, \fk) \ar[r] & 0
\\
0 \ar[r] 
& 
(\qG, V^\qG, \fg) \ar[r]^(0.35){\eins_\fg} 
\ar[u]_{=} 
&
(\qG, V^\qG, \fg) \oplus_\gamma (\qK, V^\qK, \fk)
\ar[u]^{j}_{isom.}
\ar[r]^(0.65){P_\fk} 
&
(\qK, V^\qK, \fk) \ar[r] \ar[u]_{=} 
& 0
}
\]
Two extensions with $\gamma_1, \gamma_2: \qE^\fk_* \rightarrow \qE^\fg$ are equivalent if and only if
$\gamma_1$ and $\gamma_2$ are equal. 
\end{Theorem}

We can summarize the theorem by saying that there is a one-to-one correspondence between equivalence classes of extensions of
the process $(\qK, V^\qK, \fk)$ by the process $(\qG, V^\qG, \fg)$ and contractions from $\qE^\fk_*$ to $\qE^\fg$ and this correspondence is given by the construction of $\gamma$-extensions.

\begin{proof}
Working within the $\gamma$-extension as defined above we have $\qE^\fg$ identified via the isometry
$
\left(
\begin{array}{c}
\gamma^* \\
D_{\gamma^*} \\
\end{array}
\right)
$ 
with a subspace of $\qE^\fk_* \oplus ({\mathcal D}_{\gamma^*})_0$. 
Note that
$\qE^\fk_*$ is the wandering subspace arising from the Wold decomposition of $V^\qK$, see \cite{Po89a},
and $({\mathcal D}_{\gamma^*})_0$
is another wandering subspace with all translates orthogonal to the translates of $\qE^\fk_*$. Hence the embedded $\qE^\fg$ is wandering for $V$. With this it is now easy to check that $V$ is a row isometry, that $(\qH, V, \fh)$ is a process with subprocess $(\qG, V^\qG, \fg)$ and quotient process $(\qK, V^\qK, \fk)$
and that $\gamma$ and $\gamma^*$ satisfy the formulas stated.

Now suppose that the process $(\qH', V', \fh')$
is given by a short exact sequence, i.e. as an extension of
$(\qK, V^\qK, \fk)$ by $(\qG, V^\qG, \fg)$. 
We define 
$\gamma := P_{\qE^\fg} |_{\qE^\fk_*} \colon \qE^\fk_*\rightarrow \qE^\fg$ and then form the corresponding $\gamma$-extension
$(\qG, V^\qG, \fg) \oplus_\gamma (\qK, V^\qK, \fk) = (\qH, V, \fh)$.
We verify that this yields an equivalent extension by constructing a unitary
$J^*: \qH' \rightarrow \qH$ which intertwines the row isometries $V'$ and $V$ and
which maps $\eins'_\fg \xi$ to $\eins_\fg \xi$ if $\xi \in \fg$
and $(P'_\qK)^* \eta$ to $(P_\qK)^* \eta$
if $\eta \in \qK$. In fact, then the adjoint 
$J: \qH \rightarrow \qH'$ is an extended morphism and its restriction $j: \fh \rightarrow \fh'$ is the isomorphism we look for.

To see that $J^*$ exists it is enough to check that the remaining parts can be matched correctly. We invoke the following lemma which is a standard tool in operator theory.

\begin{Lemma}\normalfont \label{lem:defect}
Let $\qL'$ and $\qL$ be Hilbert spaces and
$\qL_0$ a closed subspace of $\qL$. If 
$w: \qL' \rightarrow \qL$ is an isometry such that
$\qL = \overline{span} \{\qL_0, w \qL' \}$ then $w$ is unitarily equivalent to
\[
\left(
\begin{array}{c}
\rho \\
D_{\rho} \\
\end{array}
\right)
\colon
\qL' \rightarrow \qL_0 \oplus {\mathcal D}_{\rho}
\quad\quad \text{where}\;\rho = P_{\qL_0} w \,.
\]
\end{Lemma}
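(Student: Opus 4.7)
The plan is to decompose $w$ into its parts along $\qL_0$ and $\qL_0^\perp$ and then identify $\qL_0^\perp$ (inside $\qL$) with $\mathcal{D}_\rho$ via a polar decomposition argument.

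First, write $w = P_{\qL_0} w + P_{\qL_0^\perp} w = \rho + P_{\qL_0^\perp} w$. Since $w$ is an isometry, the relation $w^* w = \eins_{\qL'}$ together with $P_{\qL_0} \perp P_{\qL_0^\perp}$ yields
\[
(P_{\qL_0^\perp} w)^* (P_{\qL_0^\perp} w) \;=\; \eins_{\qL'} - \rho^* \rho \;=\; D_\rho^2 .
\]
Hence the positive part of the polar decomposition of $P_{\qL_0^\perp} w$ is exactly $D_\rho$, so we can write $P_{\qL_0^\perp} w = u\, D_\rho$ for a partial isometry $u$ with initial space $\overline{D_\rho \qL'} = \mathcal{D}_\rho$ and final space contained in $\qL_0^\perp$. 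In particular $u \colon \mathcal{D}_\rho \to \qL_0^\perp$ is an isometry.

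Next I would use the spanning hypothesis $\qL = \overline{\mathrm{span}}\{\qL_0, w\qL'\}$ to show that $u$ is unitary, i.e.\ that its range is all of $\qL \ominus \qL_0$. Indeed applying $P_{\qL_0^\perp}$ to the spanning relation gives $\qL \ominus \qL_0 = \overline{P_{\qL_0^\perp}(w\qL')} = \overline{u D_\rho \qL'} = u\,\mathcal{D}_\rho$, so $u$ is a unitary from $\mathcal{D}_\rho$ onto $\qL \ominus \qL_0$.

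Finally I would assemble the unitary $U \colon \qL \to \qL_0 \oplus \mathcal{D}_\rho$ defined by $U := \eins_{\qL_0} \oplus u^*$ with respect to $\qL = \qL_0 \oplus (\qL \ominus \qL_0)$. Then
\[
U w \;=\; (\eins_{\qL_0} \oplus u^*)\bigl(\rho + u D_\rho\bigr) \;=\; \begin{pmatrix} \rho \\ D_\rho \end{pmatrix},
\]
which exhibits the required unitary equivalence. The only nontrivial point is the surjectivity of $u$ onto $\qL \ominus \qL_0$, and that is precisely where the spanning hypothesis is used; everything else is bookkeeping around the polar decomposition identity $|P_{\qL_0^\perp} w| = D_\rho$.
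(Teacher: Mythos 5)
Your proof is correct. The only substantive content of the lemma is that the map $w\xi \mapsto (\rho\xi, D_\rho\xi)$ extends to a unitary, and you establish this cleanly: the identity $(P_{\qL_0^\perp}w)^*(P_{\qL_0^\perp}w)=\eins_{\qL'}-\rho^*\rho=D_\rho^2$ is right, the polar decomposition $P_{\qL_0^\perp}w=uD_\rho$ gives a partial isometry with initial space $\mathcal{D}_\rho$, and the spanning hypothesis is used exactly where it must be, to show $u\mathcal{D}_\rho=\qL\ominus\qL_0$ (note $u\mathcal{D}_\rho$ is automatically closed since $u$ is isometric there). The paper does not really prove the lemma; it only exhibits the unitary as the identity on $\qL_0$ together with $w\xi\mapsto(\rho\xi,D_\rho\xi)$ on $w\qL'$, leaving implicit the check that this prescription is consistent on the (generally non-orthogonal, possibly overlapping) spanning sets $\qL_0$ and $w\qL'$ and extends isometrically with dense range. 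Your construction $U=\eins_{\qL_0}\oplus u^*$ relative to the orthogonal decomposition $\qL=\qL_0\oplus\qL_0^\perp$ produces the very same operator but sidesteps that well-definedness issue entirely, at the modest cost of invoking polar decomposition; the paper's sketch, fully written out, would instead verify directly that inner products are preserved on the spanning set. Either way the unitary is the same, and your argument is complete.
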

The unitary from $\qL$ to $\qL_0 \oplus {\mathcal D}_{\rho}$ needed in the lemma is the identity on $\qL_0$ and it is
$w \xi \mapsto 
\left(
\begin{array}{c}
\rho \,\xi\\
D_{\rho}\, \xi\\
\end{array}
\right)
$
for $\xi \in \qL'$. We apply Lemma \ref{lem:defect} with 
$\qL' = \qE^\fg$, 
with $w$ being the isometric embedding of $\qE^\fg$ into $\qH'$ and with
$\qL =  \overline{span} \{\qK, w \, \qE^\fg \}, \; \qL_0 = \qK$. Lemma \ref{lem:gamma} shows that $\rho = \gamma^*$ in this case
and moreover that the embeddings of translates $V'_\alpha\, \qE^\fg$ 
follow exactly the pattern exposed by the $\gamma$-extension.
Hence we can put all the pieces together and get $J^*$.

Finally, if $(\qG, V^\qG, \fg) \oplus_{\gamma_1} (\qK, V^\qK, \fk)$ and $(\qG, V^\qG, \fg) \oplus_{\gamma_2} (\qK, V^\qK, \fk)$ are equivalent extensions then the unitary intertwiner $J^*$ constructed above maps for each element of $\qE^\fg$ its first embedding to the second. The same happens to elements of $\qE^\fk_*$. Hence if we suppress the embeddings we find that 
$P_{\qE^\fg} |_{\qE^\fk_*}$ is the same operator in both cases, i.e., $\gamma_1 = \gamma_2$.
\end{proof}

Remark: It is instructive to look at the situation described in the previous theorem from the point of view of dilation theory. Then we start with a row contraction on $\fh = \fg \oplus \fk$ of the form
\[
\left(
\begin{array}{cc}
X & 0 \\
Y &  Z \\
\end{array}
\right)\,
\]
which is called a lifting
(in our application this is $p_\fh V |_{\fh \otimes \qP}$).
It is well known that in such a situation $Y$ must have the form
$(D_{Z^*})^* \gamma^* D_X$ with a contraction 
$\gamma: \qD_{Z^*} \rightarrow \qD_X$,
see \cite{FF90}, Chapter IV, Lemma 2.1 for $d=1$ and
\cite{DG11}, Prop. 3.1 for the general case.
Hence a $\gamma$-extension can also be thought of as a description of the structure of the minimal isometric dilation of such a row contraction.
Liftings and their dilations are studied in \cite{DG07,DG11,DGH} and the results can be interpreted in the language of processes which we use in this paper.  
\\

Concerning the meaning of this theory of subprocesses and quotient processes within the interpretation as quantum mechanical processes it is clear that much work still needs to be done. We obtain some indications how such applications may look like when we analyze in Sections 6 and 7 how to get information from a subprocess about the full process if we have suitable observability properties. 

\section{Cascades of Systems}
\label{section:cascades}

One of the things one can do with linear systems is to stick them together in various ways. The most basic way to do that is to take the output of one system $I$ and to use it as the input of another system $II$. The combined system is then called a {\it simple cascade}, see for example \cite{GLR06} for the classical theory ($d=1$). It also works for the noncommutative Fornasini-Marchesini systems we have been considering here. Such cascade connections of Fornasini-Marchesini systems are also analyzed in Section 4 of \cite{BGM05}.
We need a slight generalization where the input of system $II$ is obtained from the output of system $I$ by applying a transformation $\Gamma$ to it. We call this a $\Gamma$-{\it cascade
of systems}. (In fact it is not really a generalization because we could absorb $\Gamma$ into the output map of system I or into the input map of system II or treat the middle part as a system on its own. But the terminology above is convenient when below we consider representations by processes.) 

\setlength{\unitlength}{0.9cm}
\begin{picture}(15,2.5)

\put(4.4,0.2){\line(0,1){2}}
\put(4.4,0.2){\line(1,0){6.4}}
\put(10.8,0.2){\line(0,1){2}}
\put(4.4,2.2){\line(1,0){6.4}}

\put(4.4,0.7){\line(1,0){1.8}}
\put(4.4,1.7){\line(1,0){1.8}}
\put(6.2,0.7){\line(0,1){1}}
\put(5.1,1.1){$x_{II}$}

\put(9.0,0.7){\line(1,0){1.8}}
\put(9.0,1.7){\line(1,0){1.8}}
\put(9.0,0.7){\line(0,1){1}}
\put(9.8,1.1){$x_{I}$}

\put(4.4,1.2){\vector(-1,0){1.4}}
\put(9.0,1.2){\vector(-1,0){1.1}}
\put(7.3,1.2){\vector(-1,0){1.1}}
\put(12.2,1.2){\vector(-1,0){1.4}}

\put(7.6,1.2){\circle{0.6}}

\put(3.6,0.9){$y_{II}$}
\put(6.6,0.9){$u_{II}$}
\put(8.3,0.9){$y_{I}$}
\put(11.4,0.9){$u_{I}$}
\put(7.5,1.1){$\Gamma$}

\end{picture}

We assume here that the two noncommutative Fornasini-Marchesini systems both have the same multiplicity $d$. Then
the internal space of the combined system is defined to be the direct sum of the internal spaces of systems $I$ and $II$ and, 
with $u_{II}(\alpha) = \Gamma\, y_I(\alpha)$ (for all $\alpha \in F^+_d$), it follows, by eliminating variables, that the structure maps
$(A,B,C,D)$ of the combined system are obtained from the structure maps $(A^{I}, B^{I}, C^{I}, D^{I})$ of system $I$ and
$(A^{II}, B^{II}, C^{II}, D^{II})$ of system $II$ by
\[
A_j =
\left(
\begin{array}{cc}
A^{I}_j & \;0 \\
B^{II}_j \,\Gamma \,C^{I} & \;A^{II}_j
\end{array}
\right),
\quad
B_j =
\left(
\begin{array}{c}
B^{I}_j \\
B^{II}_j \,\Gamma\, D^{I} 
\end{array}
\right)
\quad\quad (j=1,\ldots,d),
\]
\[
C = 
\left(
\begin{array}{cc}
D^{II} \,\Gamma \,C^{I} & \;C^{II} \\
\end{array}
\right),
\quad
D = D^{II} \,\Gamma \,D^{I}\;.
\]
In this case we also speak of a $\Gamma$-cascade of structure maps.

The transfer function $\qT$ of such a $\Gamma$-cascade of systems (or of structure maps) factorizes. 
If $T$ is a power series with coefficients $T_\alpha \in \qB(V,W)$
and $\Gamma$ maps $W$ to $W'$ then we denote by $\Gamma T$
the power series with coefficients $\Gamma T_\alpha \in \qB(V,W')$.
With this convention it is not difficult to check that the transfer function $\qT$ of the combined system is obtained from the transfer functions $\qT^{II}$ and $\qT^{I}$ of systems $II$ and $I$ by
\[
\qT(z) =\qT^{II}(z)\, \Gamma \,\qT^{I}(z)
\]
(with $z^\alpha z^\beta = z^{\beta \alpha}$).
\\

Now we prove that if $(\qH, V, \fh)$ is a $\gamma$-extension $(\qG, V^\qG, \fg) \oplus_\gamma (\qK, V^\qK, \fk)$ of processes then we can think of its Fornasini-Marchesini system, from Proposition \ref{prop:system}, as a cascade of the systems associated to $(\qG, V^\qG, \fg)$ and $(\qK, V^\qK, \fk)$. This is not so obvious if we arrive at the notion of a $\gamma$-extension of processes from a dilation point of view and it gives an additional system theoretic motivation for the study of $\gamma$-extensions.

Given a $\gamma$-extension $(\qH, V, \fh) = (\qG, V^\qG, \fg) \oplus_\gamma (\qK, V^\qK, \fk)$. Suppose that we have a representation of an output pair  $(A^\qG, C^\qG)$ by $(\qG, V^\qG, \fg)$ with represented output space $\qY^\qG_0$. Because $\qY^\qG_0 \subset \fg \oplus \qE^\fg \subset \fh \oplus \qE$ 
we can also think of $\qY_0 := \qY^\qG_0$ as an output space represented by $(\qH, V, \fh)$. Similarly suppose further that we have a representation of an input pair  $(A^\qK, B^\qK)$ by $(\qK, V^\qK, \fk)$ with represented input space $\qU^\qK_0$. Because
$\qU^\qK_0 \subset \qE^\fk \subset \qE$ we can also think of $\qU_0 := \qU^\qK_0$ as an input space represented by $(\qH, V, \fh)$.
We denote by $(A, B, C, D)$ the representation of structure maps in 
$(\qH, V, \fh)$ arising from $\qY_0 := \qY^\qG_0$ and $\qU_0 := \qU^\qK_0$.

To write $(A, B, C, D)$ as a $\Gamma$-cascade of structure maps we have to consider additionally an input space $\qU^\qG_0$ represented by $(\qG, V^\qG, \fg)$ and an output space $\qY^\qK_0$ represented by $(\qK, V^\qK, \fk)$. Now we have represented structure maps $(A^\qG, B^\qG, C^\qG, D^\qG)$ for $(\qG, V^\qG, \fg)$  and
$(A^\qK, B^\qK, C^\qK, D^\qK)$ for $(\qK, V^\qK, \fk)$, according to Definition \ref{def:structure}, and we have  
\[
\Gamma := P_{\qU^\qG_0}  |_{\qY^\qK_0}
= P_{\qU^\qG_0} \gamma P_{\qE^\fk_*} |_{\qY^\qK_0}
\]
where the latter equality follows from 
$\qU^\qG_0 \subset \qE^\fg$ together with
the geometry of the $\gamma$-extension (see Definition \ref{def:extension} and Lemma \ref{lem:gamma}). 

A subspace $\qL$ of $\qH$ is called a left support of $\gamma$ if
$P_\qL \gamma P_{\qE^\fk_*} = \gamma P_{\qE^\fk_*}$ and it is called a right support of $\gamma$ if
$\gamma P_{\qE^\fk_*} = \gamma P_{\qE^\fk_*} P_\qL$.
Roughly speaking, left and right supporting $\gamma$ 
means that the subspaces are chosen big enough to transport
the information contained in $\gamma$.
With these preparations we can now find a $\Gamma$-cascade of systems inside the $\gamma$-extension of processes. The main example is described in the corollary.

\begin{Theorem}\normalfont \label{thm:cascade-s}
Suppose that $\qU^\qG_0$ is a left support and 
$\qY^\qK_0$ is a right support of $\gamma$.
Then the structure maps 
$(A, B, C, D)$ are a $\Gamma$-cascade of
$(A^\qK, B^\qK, C^\qK, D^\qK)$ and $(A^\qG, B^\qG, C^\qG, D^\qG)$. 
Explicitly (for $j=1,\ldots,d$):
\[
A_j =
\left(
\begin{array}{cc}
A^\qK_j & \;0 \\
B^\qG_j \, \Gamma \,C^\qK & \;A^\qG_j\\
\end{array}
\right)
\quad
B_j =
\left(
\begin{array}{c}
B^\qK_j \\
B^\qG_j \, \Gamma \,D^\qK 
\end{array}
\right)
\]
\[
C = 
\left(
\begin{array}{cc}
D^{\qG} \,\Gamma \,C^{\qK} & \;C^{\qG} \\
\end{array}
\right),
\quad
D = D^{\qG} \,\Gamma D^{\qK}\;.
\]
\end{Theorem}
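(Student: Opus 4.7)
I would prove the four block formulas for $A_j$, $B_j$, $C$, $D$ by direct computation, splitting the domain $\fh=\fk\oplus\fg$ (and treating $u\in\qU^\qK_0\subset\qK$ separately) and expanding $V$ on each orthogonal summand $\fg$, $\qK$, $\bigoplus_\alpha({\mathcal D}_{\gamma^*})_\alpha$ of $\qH$ according to Definition~\ref{def:cascade}. The essential structural facts are: $\qG$ is $V$-invariant, $V|_\qK=V^\qK$, and the only coupling between $\qG$ and $\qK$ inside $\qH$ is the embedding of $\qE^\fg$ into $\qE^\fk_*\oplus({\mathcal D}_{\gamma^*})_0$ via $(\gamma^*,D_{\gamma^*})^t$. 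Lemma~\ref{lem:gamma} packages the resulting geometric relations $\gamma=P_{\qE^\fg}|_{\qE^\fk_*}$ and $\gamma^*=P_{\qE^\fk_*}|_{\qE^\fg}$ that drive every computation.

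For $A_j$ on $\xi\in\fg$, the cascade formula shows $V_j\fk\subset\qK\perp\fg$, hence $V^*_j\fg\subset\fg$, and pairing with $\zeta\in\fg$ reduces $\langle V^*_j\xi,\zeta\rangle$ to $\langle\xi,P_\fg V^\qG_j\zeta\rangle=\langle V^{\qG*}_j\xi,\zeta\rangle$; so $A_j\xi=A^\qG_j\xi$, producing the $(\fg,\fg)$-block and the zero above it. For $\xi\in\fk$ the $\fk$-component $P_\fk V^*_j\xi$ equals $V^{\qK*}_j\xi=A^\qK_j\xi$ by the quotient identity $V^{\qK*}_j=P_\qK V^*_j|_\qK$. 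For the $\fg$-component, the key step is to pair $\langle V^*_j\xi,\zeta\rangle$ with $\zeta\in\fg$ and substitute $V_j\zeta=P_\fg V^\qG_j\zeta+\gamma^*P_{\qE^\fg}V^\qG_j\zeta+D_{\gamma^*}P_{\qE^\fg}V^\qG_j\zeta$: of the three summands only the middle one, lying in $\qE^\fk_*\subset\qK$, pairs non-trivially with $\xi\in\fk$, and transposition via $\gamma^*=P_{\qE^\fk_*}|_{\qE^\fg}$ yields $P_\fg V^*_j\xi=V^{\qG*}_j\gamma P_{\qE^\fk_*}\xi$. The identical argument applied to $u\in\qU^\qK_0\subset\qK$ gives $B_ju=B^\qK_ju+V^{\qG*}_j\gamma P_{\qE^\fk_*}u$.

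The calculations for $C$ and $D$ are dual on the output side. Any $y\in\qY^\qG_0\subset\fg\oplus\qE^\fg$ is embedded into $\qH$ as $P_\fg y+\gamma^*P_{\qE^\fg}y+D_{\gamma^*}P_{\qE^\fg}y$, so for $\xi\in\fk$ (resp.\ $u\in\qU^\qK_0$) only the $\gamma^*$-piece of $y$ pairs with $\xi$ (resp.\ $u$), giving $P_{\qY^\qG_0}\xi=P_{\qY^\qG_0}\gamma P_{\qE^\fk_*}\xi$, and analogously for $u$, while on $\fg$ the projection reduces directly to $C^\qG$. In every off-diagonal expression one is then left with $V^{\qG*}_j\gamma P_{\qE^\fk_*}$ or $P_{\qY^\qG_0}\gamma P_{\qE^\fk_*}$ composed with a suitable restriction, and the support hypotheses produce the desired factorisation: the left-support assumption inserts $P_{\qU^\qG_0}$ just before $\gamma$, contributing the factor $B^\qG_j$ or $D^\qG$, while the right-support assumption inserts $P_{\qY^\qK_0}$ just after $P_{\qE^\fk_*}$, contributing $C^\qK$ or $D^\qK$. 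Combined with the definition $\Gamma=P_{\qU^\qG_0}\gamma P_{\qE^\fk_*}|_{\qY^\qK_0}$, the four matrix entries drop out exactly as stated.

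The only real obstacle is bookkeeping: the wandering spaces $\qE^\fg,\qE^\fk_*$ sit in $\qH$ through the twisted isometry $(\gamma^*,D_{\gamma^*})^t$ rather than orthogonally, so at every step one must track whether a projection is computed inside $\qG$, inside $\qK$, or inside the ambient $\qH$. Lemma~\ref{lem:gamma} together with the orthogonal decomposition $\qH=\fg\oplus\qK\oplus\bigoplus_\alpha({\mathcal D}_{\gamma^*})_\alpha$ make this translation mechanical; once they are invoked systematically, the four calculations are short and mirror each other.
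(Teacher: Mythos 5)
Your proposal is correct and follows essentially the same route as the paper: a direct block-by-block verification of $A_j$, $B_j$, $C$, $D$ using the geometry of the $\gamma$-cascade (the embedding of $\qE^\fg$ via $(\gamma^*,D_{\gamma^*})^t$, i.e.\ $P_{\fg\oplus\qE^\fg}|_\qK=P_{\qE^\fg}|_\qK=\gamma P_{\qE^\fk_*}|_\qK$ from Lemma \ref{lem:gamma}), with the left/right support hypotheses inserting $P_{\qU^\qG_0}$ and $P_{\qY^\qK_0}$ to produce the factorisation through $\Gamma$. The only difference is stylistic: you phrase the steps as inner-product pairings where the paper writes the corresponding chains of operator identities.
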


\begin{Corollary}\normalfont \label{cor:cascade-s}
With the choice $\qU^\qG_0 := \qE^\fg$ and $\qY^\qK_0 := \qE^\fk_*$
(or $\qY^\qK_0 := (ker\, \gamma)^\perp = \overline{\gamma^* \qE^\fg} \subset \qE^\fk_*$)
the assumptions of
Theorem \ref{thm:cascade-s} are satisfied and 
in this case we have $\Gamma = \gamma$. So we get a $\gamma$-cascade of systems (and of represented structure maps) and the transfer function of the $\gamma$-extension factorizes as follows:
\[
\qT^\qH(z) =\qT^\qG(z)\, \gamma \,\qT^\qK(z).
\]
\end{Corollary}

\begin{proof}
We verify the explicit formulas in Theorem \ref{thm:cascade-s} step by step. 
The arguments are based on the geometry of a $\gamma$-extension as given in Definition \ref{def:extension}, in particular: $V |_\fg = V^\qG |_\fg$ maps
$\fg$ into $\fg \oplus \qE^\fg$ and  $(\fg \oplus \qE^\fg) \ominus V^\qG (\fg \otimes \qP)$ is orthogonal to the range of $V^\qG$, further
$P_{\fg \oplus \qE^\fg} |_\qK =  P_{\qE^\fg} |_\qK
= \gamma P_{\qE^\fk_*} |_\qK$.
\begin{eqnarray*}
P_\fk A_j &=& P_\fk V^{*}_j |_\fh 
= 
\left(
\begin{array}{cc}
P_\fk V^{*}_j |_\fk & \;P_\fk V^{*}_j |_\fg \\
\end{array}
\right)
=
\left(
\begin{array}{cc}
A^\qK_j & \;0 \\
\end{array}
\right) 
\\
P_\fg A_j = P_\fg V^{*}_j |_\fh 
&= &
\left(
\begin{array}{cc}
P_\fg V^{*}_j |_\fk & \;P_\fg V^{*}_j |_\fg \\
\end{array}
\right)
= 
\left(
\begin{array}{cc}
V^{\qG *}_j P_{\fg \oplus \qE^\fg} |_\fk  & \;V^{\qG *}_j |_\fg \\
\end{array}
\right)
\\
&= &
\left(
\begin{array}{cc}
V^{\qG *}_j  P_ {\qE^\fg} \gamma P_ {\qE^\fk_*} |_\fk  & \;V^{\qG *}_j |_\fg \\
\end{array}
\right)
= 
\left(
\begin{array}{cc}
V^{\qG *}_j  P_{\qU^\qG_0} P_ {\qE^\fg} \gamma P_ {\qE^\fk_*} P_{\qY^\qK_0} |_\fk  & \;V^{\qG *}_j |_\fg \\
\end{array}
\right)
\\
&= &
\left(
\begin{array}{cc}
B^\qG_j \Gamma C^\qK  & \;A^\qG_j\\
\end{array}
\right) 
\end{eqnarray*}
\begin{eqnarray*}
P_\fk B_j &=& P_\fk V^{*}_j |_{\qU^\qK_0}
= P_\fk V^{\qK *}_j |_{\qU^\qK_0} = B^\qK_j \\
P_\fg B_j &=& P_\fg V^{*}_j |_{\qU^\qK_0} 
= P_\fg V^{\qG *}_j P_{\fg \oplus \qE^\fg} |_{\qU^\qK_0} 
= P_\fg V^{\qG *}_j P_{\qU^\qG_0} P_ {\qE^\fg} \gamma P_ {\qE^\fk_*} P_{\qY^\qK_0} |_{\qU^\qK_0}
= B^\qG_j \Gamma D^\qK 
\end{eqnarray*}
\begin{eqnarray*}
C &=& 
\left(
\begin{array}{cc}
P_{\qY^\qG_0} |_\fk &  \;P_{\qY^\qG_0} |_\fg \\
\end{array}
\right) 
=
\left(
\begin{array}{cc}
P_{\qY^\qG_0} P_{\qU^\qG_0} P_ {\qE^\fg} \gamma P_ {\qE^\fk_*} P_{\qY^\qK_0} |_\fk
&  \;P_{\qY^\qG_0} |_\fg 
\end{array}
\right) 
= 
\left(
\begin{array}{cc}
D^\qG \Gamma C^\qK & \;C^\qG \\
\end{array}
\right)
\\
D &=&  P_{\qY^\qH_0} |_{\qU^\qH_0}
= P_{\qY^\qG_0} |_{\qU^\qK_0}
= P_{\qY^\qG_0} P_{\qU^\qG_0} P_ {\qE^\fg} \gamma P_ {\qE^\fk_*} P_{\qY^\qK_0} |_{\qU^\qK_0}
= D^\qG \Gamma D^\qK \,.
\end{eqnarray*} 
This proves Theorem \ref{thm:cascade-s}. In the situation of Corollary \ref{cor:cascade-s} the left and right supporting property is clear and the remaining statements follow from the general discussion of cascades above.
\end{proof}


In the convenient situation of Corollary \ref{cor:cascade-s}
we still have the freedom to choose an output space $\qY_0 := \qY^\qG_0$ and an input space $\qU_0 := \qU^\qK_0$ according to our interests. An example is provided by
\[
\qY^\qG_0 := \qU^\qG_0 := \qE^\fg, \quad
\qU^\qK_0 := \qE^\fk\,.
\]
In this case $Y_0$ is wandering and we have
$C^\qG = 0$ and $D^\qG = \eins_{\qE^\fg}$ and 
\[
C = 
\left(
\begin{array}{cc}
\gamma \,C^{\qK} & 0 \\
\end{array}
\right),
\quad
D = \gamma D^{\qK}\;.
\]
Here the input-output system of the subprocess is trivial but the subprocess is used as a way to find an interesting output space for the
$\gamma$-extension. In a quantum physical process such a situation may occur if we confine our observations to the subprocess and try to learn from them about the extension. Examples of this type will be analyzed in more detail in the following sections.
\\

It is possible to iterate the construction shown in Theorem \ref{thm:cascade-s}.  Because  $V^* \qE^\fg_* = V^{\qG*} \qE^\fg_* = \{0\}$ we have $\qE^\fg_* \subset \qE_*$.
Hence if $(\qF, V^\qF, \ff)$ is another process (with the same multiplicity) and
$\gamma_2 \colon \qE^\fg_* \rightarrow \qE^\ff$ is a contraction then not only can we form
$(\qF, V^\qF, \ff) \oplus_{\gamma_2} (\qG, V^\qG, \fg)$ but also
$(\qF, V^\qF, \ff) \oplus_{\hat{\gamma}_2} \big[(\qG, V^\qG, \fg) \oplus_\gamma (\qK, V^\qK, \fk)\big]$ with $\hat{\gamma}_2:
\qE_* \rightarrow \qE^\ff$ given by
\[
\hat{\gamma}_2 :=
\left\{ 
\begin{array}{cl}
\gamma_2 & \text{on}\; \qE^\fg_* \\
0 & \text{on}\; \qE_* \ominus \qE^\fg_* \\
\end{array}
\right.
\]
Theorem \ref{thm:cascade-s} applies iteratively, for example with 
$
\qU^\qK_0 = \qE^\fk,\;
\qY^\qK_0 = \qE^\fk_*,\;
\qU^\qG_0 = \qE^\fg,\;
\qY^\qG_0 = \qE^\fg_*,\;
\qU^\qF_0 = \qE^\ff
$
etc.
Note that if the process $(\qG, V^\qG, \fg)$ is unital then 
$\qY^\qG_0 = \qE^\fg_* = \{0\}$ and the iteration is only possible as a direct sum of processes. On the other hand this observation suggests to study non-unital processes by considering extensions of this type. We leave this here as a future project. Relevant work is contained in section 6 of \cite{BC91} where decompositions of a given system into a cascade of two subsystems are constructed from invariant subspaces of $A= (A_1, \ldots, A_d)$. 


\section{Observability}
\label{section:observable}

To make use of the system theory now available to us for the study of processes we discuss the control theory concept of observability in the multi-variable setting. See for example \cite{BBF11} for a recent treatment of the latter in a purely operator theoretic spirit.

\begin{Definition} \normalfont \label{def:observable}
Given an output pair $(A,C)$ for an internal space $\qX$ and an output space $\qY$, a subset $\qX' \subset \qX$
is called {\it observable} if 
$(C A^\alpha |_{\qX'})_{\alpha \in F^+_d}$, the observability map restricted to $\qX'$,
is injective
(as a map from $\qX'$ to the $\qY$-valued functions on $F^+_d$).

If $(A,C)$ is represented by the process 
$(\qH, V, \fh)$ then we also say in this case 
that $\qX'\, (\subset \fh)$ is observable in
$(\qH, V, \fh)$ by the output space $\qY_0$. 
\end{Definition}

The interpretation of observability is that every $\xi \in \qX'$ can be reconstructed from the outputs $C A^\alpha \xi$. 

\begin{Proposition} \normalfont \label{prop:observable}
The subset $\qX' \subset \fh$ is observable in $(\qH, V, \fh)$ by the output space $\qY_0$ if and only if
$P_{\qY_+} |_{\qX'}$ is injective.
\end{Proposition}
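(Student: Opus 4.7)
The proof is essentially a direct unwinding of definitions, so my plan is a short, clean argument.

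First I would observe that since $\qY_0$ is wandering for $V$, the subspace $\qY_+$ decomposes as an orthogonal direct sum $\qY_+ = \bigoplus_{\alpha \in F_d^+} V_\alpha \qY_0 = \bigoplus_{\alpha \in F_d^+} \qY_\alpha$. Consequently, for any vector $\xi \in \fh$,
\[
P_{\qY_+} \xi \;=\; \bigoplus_{\alpha \in F_d^+} P_{\qY_\alpha} \xi,
\]
and this is the zero vector precisely when $P_{\qY_\alpha} \xi = 0$ for every $\alpha \in F_d^+$.

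Next I would invoke the identity already established in Section \ref{section:structure}, namely $P_{\qY_\alpha}|_\fh = V_\alpha P_{\qY_0} V_\alpha^* |_\fh$, which under the isometric identification $\qY_\alpha \simeq_\alpha \qY$ reads $P_{\qY_\alpha}|_\fh \simeq_\alpha C A^\alpha$. Since $V_\alpha$ is an isometry on $\qY_0$, vanishing of $P_{\qY_\alpha}\xi$ is equivalent to vanishing of $C A^\alpha \xi$. Combining with the previous step,
\[
P_{\qY_+} \xi = 0 \;\Longleftrightarrow\; C A^\alpha \xi = 0 \;\text{ for all } \alpha \in F_d^+ \;\Longleftrightarrow\; \qO_{C,A}(\xi) = 0.
\]

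The proposition now follows by restricting to $\qX' \subset \fh$: the map $P_{\qY_+}|_{\qX'}$ is injective if and only if the only $\xi \in \qX'$ with all $C A^\alpha \xi = 0$ is $\xi = 0$, which is exactly injectivity of the observability map $\qO_{C,A}$ restricted to $\qX'$, i.e.\ observability of $\qX'$ in the sense of Definition \ref{def:observable}. There is no real obstacle here; the only subtlety is to make sure one invokes the wandering property of $\qY_0$ so that the projection onto $\qY_+$ genuinely decomposes into the orthogonal pieces $P_{\qY_\alpha}$, which is what justifies passing from ``all coordinates vanish'' to ``the projection vanishes''.
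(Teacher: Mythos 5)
Your proof is correct and follows essentially the same route as the paper: both hinge on the identity $C A^\alpha \xi = V_\alpha^* P_{\qY_\alpha}\xi$ with $V_\alpha^*$ isometric on $\qY_\alpha$, and then replace the family $(P_{\qY_\alpha}|_{\qX'})_\alpha$ by the single contraction $P_{\qY_+}|_{\qX'}$. The only cosmetic difference is in the last step, where you invoke the wandering property to get the orthogonal decomposition of $\qY_+$, while the paper observes that $P_{\qY_+}$ is the supremum of the $P_{\qY_\alpha}$ (which kills a vector iff each $P_{\qY_\alpha}$ does, orthogonality not needed); both justifications are valid.
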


\begin{proof}
For $\xi \in \qX'$ we have
\[
C A^\alpha \xi = P_{\qY_0} V^*_\alpha \xi 
= V^*_\alpha P_{\qY_\alpha} \xi
\]
and we conclude that $\qX'$ is observable if and only if
$(P_{\qY_\alpha}  |_{\qX'})_{\alpha \in F^+_d}$ is injective.
Because the projection $P_{\qY_+}$ is the supremum of the projections $(P_{\qY_\alpha})_{\alpha \in F^+_d}$ we can
replace the family $(P_{\qY_\alpha}  |_{\qX'})_{\alpha \in F^+_d}$
by the single contraction $P_{\qY_+} |_{\qX'}$.
\end{proof}

We now concentrate on an important example already introduced in the previous section. 
If a process is a $\gamma$-extension and we use the maximal input space $\qE^\fg$ of the subprocess as a wandering output space for the extension, roughly speaking if we confine our observations to the subprocess, then the question of observability becomes a very natural issue of theoretical and practical importance.

\begin{Theorem} \normalfont \label{thm:observable}
Consider the $\gamma$-extension
\[
(\qG, V^\qG, \fg) \oplus_\gamma (\qK, V^\qK, \fk) = (\qH, V, \fh)
\]
and the output space $\qY_0 := \qE^\fg$.
The following assertions are equivalent:
\begin{itemize}
\item[(1a)]
$\fk$ is observable in $(\qH, V, \fh)$ by $\qY_0$.
\item[(1b)]
$P_{\qY_+} |_{\fk}$ is injective.
\item[(2a)]
$ \overline{span} \{(A^\alpha)^* \,\fg \colon \alpha \in F^+_d \} = \fh $
\item[(2b)]
$P_\fh |_\qG \colon \qG \rightarrow \fh$ has dense range.
\item[(3a)]
$\qG = \qH$
\item[(3b)]
There is an extended morphism associated to
the morphism $\eins_\fg: \fg \rightarrow \fh$
\\
which is a unitary from $\qH$ to $\qH$. 
\item[(4a)]
$V^\qK$ is a row shift and $\gamma \colon \qE^\fk_* \rightarrow \qE^\fg$ is injective.
\item[(4b)]
$V^\qK$ is a row shift and $\gamma \colon \qE^\fk_* \rightarrow \qE^\fg$ is isometric.
\end{itemize}
If the transition operator $Z$ of $(\qH, V, \fh)$ is unital then we also have the following equivalent condition:
\begin{itemize}
\item[(5)]
$\lim_{n \to \infty} Z^n (P_\fg) = \eins_\fh$ (in the strong operator topology)
\end{itemize}
\end{Theorem}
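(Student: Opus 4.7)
The plan is to pivot all of conditions (1)--(4) around the single geometric statement $\qG = \qH$ (namely (3a)), and then to deduce (5) separately under the unitality hypothesis on $Z$.

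First I would identify the output tower with the subprocess: applying Proposition \ref{prop:wandering} to $(\qG, V^\qG, \fg)$ (with $V|_\qG = V^\qG$) gives $\qG = \fg \oplus \bigoplus_{\alpha \in F^+_d} V_\alpha \qE^\fg$, so $\qY_+ = \qG \ominus \fg$. Since $\fk \perp \fg$, this yields $P_{\qY_+}|_\fk = P_\qG|_\fk$, so by Proposition \ref{prop:observable} both (1a) and (1b) say exactly that $\fk \cap \qG^\perp = \{0\}$. The (2)-block is parallel: co-invariance of $\fh$ gives $(A^\alpha)^* = P_\fh V_\alpha|_\fh$, hence the closed span of $\{(A^\alpha)^*\fg : \alpha \in F^+_d\}$ equals $\overline{P_\fh \qG}$, proving (2a) $\Leftrightarrow$ (2b); and since $\fg \subset \qG$ one has $\fh \cap \qG^\perp = \fk \cap \qG^\perp$, giving (1b) $\Leftrightarrow$ (2b).

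Next I would dispatch (3). (3a) $\Rightarrow$ (3b) is trivial via $T = \eins_\qH$, while (3b) $\Rightarrow$ (3a) holds because any extended morphism associated to $\eins_\fg$ has image equal to the $V$-invariant subspace of $\qH$ generated by $\fg$, namely $\qG$, so unitarity forces $\qG = \qH$. Clearly (3a) $\Rightarrow$ (2b). The substantive implication is the converse, which I would handle by analyzing $\qG^\perp$ directly inside the cascade decomposition $\qH = \fg \oplus \qK \oplus \bigoplus_\alpha (\qD_{\gamma^*})_\alpha$ of Definition \ref{def:cascade}. An element $\eta = \eta_\qK + \sum_\alpha \tilde\eta_\alpha$ (its $\fg$-component vanishes since $\fg \subset \qG$) lies in $\qG^\perp$ iff it is orthogonal to every $V_\alpha$(embedded $\qE^\fg$); a direct computation using $V|_{\qE^\fk_*} = V^\qK$ and the row-shift action on $\bigoplus_\alpha (\qD_{\gamma^*})_\alpha$ reduces this to the family
\[
\gamma\, P_{\qE^\fk_*}\, W_\alpha^*\, \eta_\qK \;+\; D_{\gamma^*}\, \tilde\eta_\alpha \;=\; 0 \qquad (\alpha \in F^+_d),
\]
where $W := V^\qK$. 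Reading off this system: $\qG^\perp = \{0\}$ forces $W$ to be a row shift (otherwise any non-zero vector in the unitary part of the Wold decomposition of $V^\qK$ is annihilated by all $P_{\qE^\fk_*} W_\alpha^*$ and gives a solution with $\tilde\eta_\alpha = 0$) and $\gamma$ to be injective (a non-zero $\zeta \in \ker \gamma$ gives another such solution); conversely these two conditions suffice, with the defect term $D_{\gamma^*} \tilde\eta_\alpha$ being controlled precisely by the row-shift Parseval expansion of $\eta_\qK$ with respect to $\qE^\fk_*$. This simultaneously yields (3a) $\Leftrightarrow$ (4a) $\Leftrightarrow$ (4b) and closes the circle with (1b)/(2b).

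Finally, for (5) under $Z$ unital: $V^{(n)}$ is unitary from $\qH \otimes \qP^{\otimes n}$ onto $\qH$, so $V^{(n)}(P_\fg \otimes \eins)V^{(n)*}$ is the orthogonal projection onto $\qG_n := V^{(n)}(\fg \otimes \qP^{\otimes n}) = \sum_{|\alpha|=n} V_\alpha \fg$. The identity $V_\alpha = \sum_k V_{\alpha k} V_k^*$ (row unitarity) combined with co-invariance $V_k^* \fg \subset \fg$ shows $\qG_n \subset \qG_{n+1}$, and $\overline{\bigcup_n \qG_n}$ equals the $V$-invariant subspace generated by $\fg$, namely $\qG$. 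Hence $P_{\qG_n} \uparrow P_\qG$ strongly, giving
\[
Z^n(P_\fg) \;=\; P_\fh\, P_{\qG_n}\, P_\fh\big|_\fh \;\longrightarrow\; P_\fh\, P_\qG\, P_\fh\big|_\fh \qquad (\textsc{sot}),
\]
which equals $\eins_\fh$ iff $\fh \subset \qG$; and since $\qG$ is $V$-invariant and $\qH = \overline{\operatorname{span}}\{V_\alpha \fh : \alpha \in F^+_d\}$, $\fh \subset \qG$ already forces $\qH \subset \qG$, i.e.\ (3a). The converse is immediate. The main obstacle throughout the argument is the cascade-level computation in the third paragraph: one must verify that the injectivity condition $\fk \cap \qG^\perp = \{0\}$ really does propagate to $\qG^\perp = \{0\}$, which requires the precise bookkeeping between the Wold shift structure of $V^\qK$ and the defect term $D_{\gamma^*}$ on the auxiliary spaces $(\qD_{\gamma^*})_\alpha$.
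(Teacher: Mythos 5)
Your reductions of (1a)/(1b)/(2a)/(2b) to the single statement $\fh\cap\qG^\perp=\{0\}$, your treatment of (3a) $\Leftrightarrow$ (3b), and your argument for (5) all match the paper's proof and are fine. The problem sits exactly where you locate it: the cascade-level computation in your third paragraph. Your system
\[
\gamma\, P_{\qE^\fk_*}\, W_\alpha^*\, \eta_\qK \;+\; D_{\gamma^*}\, \tilde\eta_\alpha \;=\; 0 \qquad (\alpha\in F^+_d),\quad W:=V^\qK,
\]
is set up correctly, and it does show that $\qG=\qH$ forces $V^\qK$ to be a row shift and $\gamma$ to be injective. But the converse you assert --- that these two conditions ``suffice, with the defect term controlled by the Parseval expansion'' --- is false, and no bookkeeping rescues it. If $\gamma$ is injective but not isometric then $D_{\gamma^*}\neq 0$, and the single equation $\gamma e+D_{\gamma^*}\tilde\eta=0$ already has nonzero solutions; choosing any square-summable family $(e_\alpha)\subset\qE^\fk_*$ and matching $\tilde\eta_\alpha$ accordingly produces a nonzero $\eta\in\qG^\perp$. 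Concretely, take $d=1$, both $V^\qG$ and $V^\qK$ unilateral shifts with one-dimensional $\fg=\Cset f_0$ and $\fk=\Cset e_0$, and $\gamma=\tfrac12$: the embedded $\qE^\fg$ is spanned by $u_0=\tfrac12 e_0+\tfrac{\sqrt3}{2}d_0$, the vector $\tfrac{\sqrt3}{2}e_0-\tfrac12 d_0$ is orthogonal to $\qG$, yet $P_{\qY_+}|_\fk$ is injective. So (1b) and (4a) hold while (3a) and (4b) fail. A second, independent gap is that even where your computation is valid it characterizes $\qG^\perp=\{0\}$, whereas (1b)/(2b) only give $\fh\cap\qG^\perp=\{0\}$; the nonzero elements of $\qG^\perp$ exhibited above do not lie in $\fh$, so nothing in your argument propagates the weaker hypothesis to the stronger conclusion.

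You should also be aware that the paper's own proof does not fare better at this junction: it passes from (2b) to (3a) by arguing that $P_{V_\alpha\fh}\,\qG$ is dense in each $V_\alpha\fh$ and that $\qH$ is the closed span of the $V_\alpha\fh$, but density of the projections of $\qG$ onto a spanning family of subspaces does not imply density of $\qG$ in $\qH$ (the example above is precisely a failure of this inference). Your computation, carried out honestly, separates the listed conditions into two genuinely inequivalent groups, $\{(1a),(1b),(2a),(2b),(4a)\}$ and $\{(3a),(3b),(4b),(5)\}$, with the second implying the first but not conversely. So the obstacle you flag is not a missing estimate but the impossibility of the step; a correct statement has to either strengthen the observability-type conditions (e.g.\ to $P_{\qY_+}|_\fk$ being bounded below, matching $\gamma$ isometric) or accept the weaker grouping.
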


\begin{proof}
The equivalence of (1a) and (1b) is Proposition \ref{prop:observable}. 
Further
\begin{eqnarray*}
& & P_{\qY_+} |_{\fk}\; \text{injective} \\
&\Leftrightarrow &  P_\qG |_\fh \; \text{injective} 
\quad (\text{by adding the space} \, \fg) \\
&\Leftrightarrow &  P_\fh |_\qG \; \text{has dense range} 
\quad \text{(adjoint map)}
\end{eqnarray*}
which shows (1b) $\Leftrightarrow$ (2b). 
Because $G= \overline{span} \{V_\alpha \fg
\colon \alpha \in F^+_d \}$ assertion
(2b) is equivalent to
\[
\fh = 
\overline{span} \{ P_\fh V_\alpha \fg
\colon \alpha \in F^+_d \}
= \overline{span} \{ (A^\alpha)^* \fg
\colon \alpha \in F^+_d \}
\]
which is (2a). On the other hand
\[
P_\fh |_\qG \; \text{has dense range}
\Rightarrow  P_\qH |_\qG \; \text{has dense range}
\]
because $\qH$ is the closed linear span
of the $V_\alpha \fh$
with $\alpha \in F^+_d$ and
we have $\qG \supset V_\alpha \qG$, hence
\[
P_{V_\alpha \fh} \qG \supset P_{V_\alpha \fh} V_\alpha \qG
= V_\alpha P_\fh \qG
\]
which is dense in $V_\alpha \fh$. But $\qG$ is a closed subspace of $\qH$ and we conclude that (2b) implies (3a). The converse, (3a) implies (2b), is obvious.
 
It is easy to check that in a $\gamma$-extension the morphism $\eins_\fg$,
the embedding of $\fg$ into $\fh$, always has
$\eins_\qG$, the embedding of $\qG$ into $\qH$, as an associated extended morphism.
If we have (3a), i.e. $\qG = \qH$, then
$\eins_\qG$ is nothing but the identity operator $\eins_\qH$ on $\qH$, hence (3a) implies (3b). Conversely (3a) is implicit in the statement of (3b). 

From Definition \ref{def:extension} of the $\gamma$-extension we have
\[
\qE^\fg \subset \qE^\fk_* \oplus (D_{\gamma^*})_0
\]
with $\qE^\fk_* \subset \qK$ and
$(D_{\gamma^*})_0 \subset \qK^\perp$, 
from which we get $P_\qK \qY_+ \subset \bigoplus_\alpha V_\alpha \qE^\fk_*$. If we have (3a), i.e. $\qG = \qH$, then,
because $\qG = \fg \oplus \qY_+$ and
$\fg \perp \qK$, we find 
$P_\qK \qY_+ = \qK$
and hence $\qK = \bigoplus_\alpha V_\alpha \qE^\fk_*$ which means that $V^\qK$ is a row shift. From (3a) we have $\qK \subset \qG$
and together with $\qE^\fk_* \perp \fg$ and $\qE^\fk_* \perp
V_\alpha \qE^\fg$ for all $\alpha \not=0$ 
(by Lemma \ref{lem:gamma})
we conclude that $\qE^\fk_*$ is actually a subspace of $\qE^\fg$ which means that
$\gamma = P_{\qE^\fg} |_{\qE^\fk_*}$ is an isometric embedding.
Hence (3a) implies (4b). Obviously (4b) implies (4a) and from (4a), together with Lemma \ref{lem:gamma}, we get an injective map
\[
P_{\qY_+} |_\qK =
\bigoplus_\alpha V_\alpha \gamma V^*_\alpha \colon \quad
\qK = \bigoplus_\alpha V_\alpha \qE^\fk_* 
\rightarrow 
\bigoplus_\alpha V_\alpha \qE^\fg
= \qY_+
\]
and (1b) follows.

Finally if $Z$ is unital then the projections $\theta^n (P_\fg)$ increase with $n$ and converge to their supremum $P_\qG$
(see Section 2),
hence  
$Z^n (P_\fg)= P_\fh \theta^n (P_\fg) |_\fh$ converges to $P_\fh P_\qG |_\fh$ (in the strong operator topology). Now (3a)
implies that 
$P_\fh P_\qG |_\fh = \eins_\fh$ and this implies (2b), hence all these assertions, including (5), are equivalent.  
\end{proof}

Remark: Of course (3b) does not mean that 
$(\qG, V^\qG, \fg)$ and $(\qH, V, \fh)$
are isomorphic as processes. In fact, if 
$\fg \not= \fh$ then the morphism $\eins_\fg$,
the embedding of $\fg$ into $\fh$, is not unitary. 

Let us simplify the terminology as follows.

\begin{Definition} \normalfont \label{def:ac}
If one (and hence all) of the assertions in Theorem
\ref{thm:observable} are satisfied
for the short exact sequence
\[
\xymatrix{
0 \ar[r] & (\qG, V^\qG, \fg) \ar[r]^{\eins_\fg}  &
(\qH, V, \fh) \ar[r]^{P_\fk}  &
(\qK, V^\qK, \fk) \ar[r]  & 0
}
\]
then we call this sequence observable or we say that the process $(\qH, V, \fh)$ is observable by the subprocess $(\qG, V^\qG, \fg)$.
\end{Definition}


\begin{Corollary} \normalfont \label{cor:inner}
Let an observable sequence as in Definition \ref{def:ac} be given together with an input space $\,\qU_0$ and the wandering output space $\qY_0 = \qE^\fg$, both represented by $(\qH, V, \fh)$.
Then $\qO_{C,A} |_\fk = (C A^\alpha |_\fk)_{\alpha \in F^+_d}
= (C \, (A^\qK)^\alpha)_{\alpha \in F^+_d}$, the restriction of the observability map to $\fk$, is isometric and
the transfer function
$\qT$ is inner, i.e., the corresponding multi-analytic operator is isometric.
\end{Corollary}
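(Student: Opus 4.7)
The plan is to exploit the equivalence $\qG = \qH$ (condition (3a) of Theorem \ref{thm:observable}) which asymptotic completeness gives us, together with the wandering structure of $\qY_0 = \qE^\fg$. The single key geometric fact is that $\qH \ominus \fg = \qY_+$ under asymptotic completeness, and that both $\fk$ and $\qU_+$ sit inside $\qH \ominus \fg$.

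For the isometry of $\qO_{C,A}|_\fk$, I would first compute for $\xi \in \fh$,
\[
\|\qO_{C,A}\xi\|^2 = \sum_{\alpha \in F^+_d} \|CA^\alpha \xi\|^2 = \sum_\alpha \|P_{\qY_0} V^*_\alpha \xi\|^2 = \sum_\alpha \|P_{\qY_\alpha} \xi\|^2 = \|P_{\qY_+}\xi\|^2,
\]
where the second-to-last equality uses $P_{\qY_\alpha} = V_\alpha P_{\qY_0} V^*_\alpha$ together with $V_\alpha$ being an isometry, and the last equality uses the wandering property of $\qY_0$, giving the orthogonal decomposition $\qY_+ = \bigoplus_\alpha \qY_\alpha$. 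Next I would invoke Proposition \ref{prop:wandering} applied to the subprocess $(\qG, V^\qG, \fg)$ to obtain $\qG = \fg \oplus \qY_+$, so that asymptotic completeness ($\qH = \qG$) yields $\qH \ominus \fg = \qY_+$. Since $\fk \subset \fh$ and $\fk \perp \fg$, the inclusion $\fk \subset \qY_+$ holds, hence $P_{\qY_+}|_\fk$ is the identity embedding and $\|\qO_{C,A}\xi\| = \|\xi\|$ for $\xi \in \fk$. The identity $CA^\alpha|_\fk = C\,(A^\qK)^\alpha$ then comes from reading off $A^\qK$ as the restriction of $A = V^*|_\fh$ to the co-invariant subspace $\fk$ of the quotient process (more precisely, after projection in the quotient).

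For the transfer function $\qT$ being inner, I would recall from the end of Section \ref{section:structure} that the multi-analytic operator associated to $\qT$ is exactly $P_{\qY_+}|_{\qU_+}$. It suffices to show $\qU_+ \subset \qY_+$. Since $\qU_0 \subset \qE \perp \fh$, we have $\qU_0 \perp \fg$, and by the co-invariance of $\fh$ (Lemma \ref{lem:co}(2)) we get $V_\beta \qU_0 \perp \fh \supset \fg$ for every $\beta \in F^+_d$, so $\qU_+ \perp \fg$, i.e., $\qU_+ \subset \qH \ominus \fg = \qY_+$. Thus $P_{\qY_+}|_{\qU_+}$ is again the identity embedding, hence isometric, which is the definition of $\qT$ being inner.

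The only delicate point, and the main thing to verify carefully, is that $\qE^\fg$ is indeed a legitimate represented output space for the large process $(\qH, V, \fh)$: this needs $\qE^\fg \subset \fh_1$ and $\qE^\fg$ wandering for $V$ (not merely for $V^\qG$). The first containment follows from $\qE^\fg \subset \fg \oplus V(\fg \otimes \qP) \subset \fh \oplus V(\fh \otimes \qP) = \fh_1$, and wandering for $V$ follows from wandering for $V^\qG$ together with the fact that $V_k|_\qG = V^\qG_k$ on the $V$-invariant subspace $\qG$. Once this preparatory observation is in hand, the rest is a routine bookkeeping exercise built on Theorem \ref{thm:observable} and Proposition \ref{prop:wandering}.
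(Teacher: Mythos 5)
Your proposal is correct and follows essentially the same route as the paper: invoke Theorem \ref{thm:observable}(3a) to get $\qH = \qG = \fg \oplus \qY_+$ (via Proposition \ref{prop:wandering} applied to the subprocess), then note that $\fk$ and $\qU_+$ are orthogonal to $\fg$ and hence lie inside $\qY_+$, making $P_{\qY_+}|_\fk$ and $P_{\qY_+}|_{\qU_+}$ isometric. Your additional checks (that $\qE^\fg$ is a legitimate wandering output space for $V$, and the identification of the multi-analytic operator with $P_{\qY_+}|_{\qU_+}$) are points the paper settles earlier in Sections \ref{section:structure} and \ref{section:cascades}, so spelling them out is harmless and complete.
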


\begin{proof}
Note that $\qY_0 = \qE^\fg \perp \fg$ indeed gives us
$C A^\alpha |_\fk = C (A^\qK)^\alpha$ for all $\alpha$.
We have $V_\alpha C A^\alpha |_\fk = P_{\qY_\alpha} |_{\fk}$. 
The multi-analytic operator corresponding to the transfer function $\qT$ is $P_{\qY_+} |_{\qU_+}$.

From observability it follows that $\fg \oplus \qY_+ = \qG = \qH$, see Theorem \ref{thm:observable} (3a).
Hence, because $\fk$ and $\qU_+$ are orthogonal to $\fg$, they are both subspaces of $\qY_+$ and it follows that $P_{\qY_+} |_{\fk}$ and $P_{\qY_+} |_{\qU_+}$ are isometric.
\end{proof} 

We describe a few alternative ways to interpret observability. First, if we have observability by a subprocess and we choose $\,\qU_0$ maximal, i.e.
$\,\qU_0 = \qE$, then the linear map given by the identity $\eins_\fg$ on $\fg$, by the observability map $\qO_{C,A} |_\fk$ on $\fk$ and by the
multi-analytic operator associated to $\qT$
on $\qU_+$ is equal to the identity $\eins_\qH$. This is a direct consequence of Corollary \ref{cor:inner}.
But it is the identity $\eins_\qH$ presented with a change of basis that describes the relative position of the weak
filtrations of the process $(\qH,V, \fh)$ and of the process $(\qG,V^\qG,\fg)$. See also Section 7 where this is interpreted as a kind of M{\o}ller operator in the sense of scattering theory. 

Second, from the point of view of dilation theory, in the case of observability by a subprocess we deal with so called subisometric dilations. We don't go into this here, see \cite{DG11} for details.  

Third, observability by a subprocess can also be given an explicit quantum physical interpretation by reconsidering the observables discussed in Proposition \ref{prop:operational}.
Indeed, in the case of observability the Hilbert space $\qH$ is the closed span of the spaces $V^{(n)} \,\fg \otimes \bigotimes^n_1 \qP$ (with $n \in \Nset_0$, the case $n=0$ to be interpreted as $\fg$). Let us assume that $(\qG,V^\qG,\fg)$
is a unital process. Then $V^{(n)} \,\fg \otimes \bigotimes^n_1 \qP$ is increasing with $n$ (see Section 2)
and it follows that the algebras $V^{(n)} \,\qB(\fg \otimes \bigotimes^n_1 \! \qP) \,V^{(n)*}$ generate $\qB(\qH)$ (as a von Neumann algebra). Modifying the approach in Proposition \ref{prop:operational} we note that the observables
\begin{eqnarray*}
X_n &=& V^{(n)} \; X \! \otimes \bigotimes^{n}_1 \!
\eins_\qP \; V^{(n)*}\,, \\
Y_{m,n} &=& V^{(n)} \; \eins_\qH \otimes \bigotimes^{m-1}_1 \!
\eins_\qP \otimes Y \otimes \bigotimes^{n}_{m+1}\eins_\qP \; V^{(n)*}, \quad m=1,\ldots,n,
\end{eqnarray*}
if we consider all $X \in \qB(\fg) = P_\fg \qB(\fh) P_\fg$ and all $Y \in \qB(\qP)$, generate $V^{(n)}\, \qB(\fg \otimes \bigotimes^n_1 \! \qP)\, V^{(n)*}$ (as a von Neumann algebra) and we conclude that in the case of observability by the subprocess we can approximate arbitrary observables in $\qB(\qH)$ in the weak (or strong) operator topology by observables generated by these $X_n$ and $Y_{m,n}$
(which is the same as $X_n$ with $X \in \qB(\fg)$ and $Y_n$ with $Y \in \qB(\qP)$ for all $n \in \Nset_0$, see Proposition \ref{prop:operational}). 
In physics language,
we can answer all questions about observables on the part $\fk$
from measuring observables on the orthogonal part $\fg$ of the internal space (the $X_n$ with $X \in \qB(\fg)$) together with field observables (the $Y_n$). For example, in principle it is possible to determine the state from such observations. This is in fact a problem of quantum tomography. We cannot discuss this in detail at this point, see the survey article \cite{AGG05} for further information. In particular consider the important method of quantum homodyne tomography widely used in quantum optics, that is determining the state from measuring so called quadratures, different linear combinations of certain noncommuting observables in different experiments with the same unknown state. This can be realized in our scheme by varying the orthonormal basis chosen in $\qP$ for different experiments with the same unknown state and hence varying the $Y$-observable which is measured. We discuss a very specific example in Section 7.   

We have the following quantitative statement about these approximations which stresses the role of the observability operator $\qO_{C,A}$: If $\xi \in \fk$ then the norm distance squared of $\xi$ from the space $\fg_n = \fg \oplus \bigoplus_{|\alpha|<n} V_\alpha \qE^\fg$ is given by $\|\xi\|^2 - \sum_{|\alpha|<n} \|C A^\alpha \xi \|^2$. If we have observability then $\qO_{C,A} |_\fk$ is isometric and this distance tends to $0$ for $n \to \infty$. It is an interesting question how such formulas coming from the additive structure compare with the multiplicative structure involving tensor products and entanglement. We have to leave this investigation as a future project for now. 

\section{Subprocesses from Normal Invariant States}
\label{section:scattering}

The following way of finding subprocesses gives a connection to a topic which is of natural interest for quantum Markov processes and more general for quantum probability: invariant states. Recall that if $\phi$ is a normal state of a von Neumann algebra then in this von Neumann algebra there exists a smallest orthogonal projection $p$ such that $\phi(p)=1$, called the support projection $s(\phi)$. For all elements $x$ we have
\[
\phi(x) = \phi( s(\phi)\, x ) = \phi(x\, s(\phi) ) =\phi( s(\phi)\, x\, s(\phi) ) \,.
\]
Essentially the following is Lemma 6.1 from \cite{BJKW00}, for convenience we include a proof which uses our now familiar terminology and notation.

\begin{Proposition} \normalfont \label{prop:state}
Given a process $(\qH, V, \fh)$, suppose that $\phi$ is a normal state of $\qB(\fh)$ which is invariant for the transition operator $Z$, i.e.,
\[
\phi (Z(x)) = \phi(x) \quad \text{for all}\; x \in \qB(\fh)\,.
\]
Then with $P_\fg := s(\phi)$ the subspace $\fg$ is co-invariant for $V$.
\end{Proposition}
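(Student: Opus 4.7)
Proof plan for Proposition \ref{prop:state}:

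My plan is to translate the invariance of $\phi$ under $Z$ into a geometric property of $P_\fg$, and then invoke Lemma \ref{lem:co} to conclude. The key algebraic fact about support projections I shall use repeatedly is this: if $a \in \qB(\fh)$ is positive and $\phi(a) = 0$, then $a^{1/2} s(\phi) = 0$, hence $a\, s(\phi) = 0$.

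First, let $q := \eins_\fh - P_\fg \in \qB(\fh)$, the projection onto $\fh \ominus \fg$. Since $\phi(P_\fg) = 1$, we have $\phi(q) = 0$, and invariance gives $\phi(Z(q)) = \phi(q) = 0$. Because $Z$ is positive, $Z(q) \ge 0$ in $\qB(\fh)$, and the support-projection fact above yields $Z(q)\, P_\fg = 0$.

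Next I would unpack the definition $Z(x)\, P_\fh = P_\fh\, \theta(x P_\fh)\, P_\fh$ (so as an operator on $\qH$, $Z(q) = P_\fh\, \theta(q)\, P_\fh$, using $q P_\fh = q$). Since $P_\fg \le P_\fh$, so that $P_\fh P_\fg = P_\fg$, the equation $Z(q)\, P_\fg = 0$ becomes
\[
P_\fh\, \theta(q)\, P_\fg = 0.
\]
Multiplying on the left by $P_\fg$ gives $P_\fg\, \theta(q)\, P_\fg = 0$. Since $\theta(q) \ge 0$, this forces $\theta(q)^{1/2} P_\fg = 0$, hence $\theta(q)\, P_\fg = 0$, i.e.
\[
\theta(P_\fh)\, P_\fg \;=\; \theta(P_\fg)\, P_\fg.
\]

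Finally, I would feed in the hypothesis that $\fh$ is co-invariant: Lemma \ref{lem:co}(1) gives $\theta(P_\fh)\, P_\fh = \theta(\eins_\qH)\, P_\fh$, so multiplying on the right by $P_\fg \le P_\fh$ yields $\theta(P_\fh)\, P_\fg = \theta(\eins_\qH)\, P_\fg$. Combined with the previous display,
\[
\theta(\eins_\qH)\, P_\fg \;=\; \theta(P_\fg)\, P_\fg,
\]
which by Lemma \ref{lem:co}(1) says exactly that $\fg$ is co-invariant for $V$. The main subtlety, and the step I would take most care over, is the bookkeeping between operators on $\fh$ and on $\qH$ when unpacking $Z(q)$, together with the repeated use of $P_\fg \le P_\fh$; the positivity-plus-support argument is routine but must be run with $q$ and $\theta(q)$ on the correct side of $P_\fg$.
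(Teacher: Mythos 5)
Your proof is correct and follows essentially the same route as the paper: both arguments use invariance of $\phi$ together with the support projection to show that $P_\fg\, Z(P_{\fh\ominus\fg})\, P_\fg = 0$, and then exploit positivity to upgrade this to the vanishing of an operator that encodes co-invariance of $\fg$. The only cosmetic difference is in the last step: the paper factors $P_\fg Z(P_\fk)P_\fg$ directly as $X^*X$ with $X=(P_\fk\otimes\eins_\qP)V^*P_\fg$ and reads off $V^*\fg\subset\fg\otimes\qP$, whereas you phrase the same conclusion through $\theta(q)P_\fg=0$ and criterion (1) of Lemma \ref{lem:co}; these are equivalent formulations.
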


\begin{proof}
Let $\fk := \fh \ominus \fg$. Then 
\[
0 = \phi( P_\fk ) = \phi ( Z(P_\fk)) = \phi (P_\fg Z(P_\fk) P_\fg ),
\]
hence $P_\fg Z(P_\fk) P_\fg = 0$ (because $\phi$ restricted to
the subalgebra $s(\phi) \,\qB(\fh) \,s(\phi)$, obtained by compression with its support, is a faithful state). But
\[
P_\fg Z(P_\fk) P_\fg = P_\fg V (P_\fk \otimes \eins_\qP) V^* P_\fg
= X^*X
\]
with $X= (P_\fk \otimes \eins_\qP) V^* P_\fg$. Hence $X=0$ which shows that $\fg$ is co-invariant for $V$.
\end{proof}

Using Proposition \ref{prop:state} we can always find a subprocess from a normal invariant state and this subprocess is nontrivial, in the sense that $\fg \not= \fh$, if and only if the state is not faithful. Definition \ref{def:ac} for observability applies and we can use the criteria in Theorem \ref{thm:observable}. Instead of further analyzing the general case we concentrate for the rest of this section on an application to a class of noncommutative Markov processes which are not originally weak Markov processes but which nevertheless can be studied successfully by our methods.

The following construction represents the most basic way of introducing stationary Markov chains in an operator algebraic context. In this form it is taken from \cite{GKL06}, see more details there. A more leisurely introduction to the topic is \cite{Ku03}.
Let $\qA$ and $\qC$ be $C^*$-algebras and let
\[
j: \qA \rightarrow \qA \otimes \qC
\]
be a non-zero $*$-homomorphism. This can be iterated to yield
the $*$-homomorphisms
\[
j_n: \qA \rightarrow \qA \otimes \bigotimes^n_1 \qC\; ,
\]
where $j_1(a) := j(a) =: \sum_k  a_k \otimes c_k \in \qA \otimes \qC$ and then inductively
\[
j_n(a) := \sum_k j_{n-1}(a_k) \otimes c_k 
\in (\qA \otimes \bigotimes^{n-1}_1 \qC) \otimes \qC
= \qA \otimes \bigotimes^n_1 \qC\,.
\]
If $\phi$ resp. $\psi$ are states on $\qA$ respectively $\qC$ and we impose
the stationarity condition
\[
(\phi \otimes \psi) \circ j = \phi
\]
then we can think of the sequence $(j_n)$ as noncommutative random variables which form a noncommutative stationary Markov chain.

We can associate a weak process in the sense of this paper by applying the GNS-construction which from 
$(\qA, \phi)$ produces
$(\fh, \Omega_\phi)$ and from $(\qC, \psi)$ produces
$(\qP, \Omega_\psi)$, the GNS-Hilbert space and a cyclic vector representing the state (in each case). Then the stationarity condition translates into the fact that
\begin{eqnarray*}
v_1 \colon \quad \fh &\rightarrow& \fh \otimes \qP \\
a \Omega_\phi &\mapsto& j(a)\, \Omega_\phi \otimes \Omega_\psi \\
\end{eqnarray*}
(with $a \in \qA$) is an isometry. In Chapter 1 of \cite{Go04a} $v_1$ is called the {\it associated isometry} and it plays a central role there in the analysis of the Markov chain. We are now in a position to deepen this analysis
and to make the conceptual framework more elegant by putting it into the context provided in this paper. 
We start with the adjoint
\[
v^*_1: \fh \otimes \qP \rightarrow \fh
\]
which is a (row) coisometry. Hence its minimal isometric dilation
\[
V:  \qH \otimes \qP \rightarrow \qH
\]
is a row unitary and $(\qH, V, \fh)$ is a unital process. (In the notation used earlier in this paper and after a choice of basis in $\qP$  this is the minimal isometric dilation $V = (V_1, \ldots, V_d)$ of a coisometric row contraction $(A^*_1, \ldots, A^*_d)$.
It is a well known fact in dilation theory which can be checked directly that the minimal isometric dilation $V$ on $\qH$ of a row contraction
on $\fh$ is a row unitary if and only if the row contraction is coisometric.) We call $(\qH, V, \fh)$ the weak process {\it dual} to the original stationary Markov chain.
\\

Remark: If we think of $V^*$ as a kind of (Schr{\"o}dinger) dynamics of vector states then we see that for the dual weak process this is provided on $\fh$ by the associated isometry $v_1$. Note however that if we consider the original noncommutative random variables $j_n$ as steps of a (Heisenberg) dynamics of observables then there is a time reversal involved if instead we go for the noncommutative random variables $J^{(n)}$ (as in Section 2) 
as steps of a (Heisenberg) dynamics of observables for the dual weak process. This is the reason why we call this weak process dual to the original chain. There is more to be said about this kind of duality, see \cite{Go04a}, 
Chapters 1 and 2, but here we just use this idea as an interesting way to produce weak processes.
\\

In fact there is more structure available from the stationary Markov chain. One can check that the vector state represented by $\Omega_\phi$ is invariant for the transition operator $Z'$ of the dual weak process and hence we can invoke Proposition \ref{prop:state} to see that its support, the one-dimensional subspace $\fg = \Cset \Omega_\phi$, is co-invariant for $V$. In fact it is easy to check this directly:
Consider the one-dimensional subspace $\fg := \Cset \Omega_\phi \subset \fh$. Because $V^* |_\fh = v_1$ and
$v_1 \Omega_\phi = \Omega_\phi \otimes \Omega_\psi$ we find
\[
V^* \fg = V^* \Cset \Omega_\phi = \Cset \Omega_\phi \otimes \Omega_\psi \subset \fg \otimes \qP
\]
which implies that $\fg$ is co-invariant for $V$ and hence we have a subprocess $(\qG, V^\qG, \fg)$. 
Combined with the analysis in Section 4
we obtain the surprising result that the (weak) process dual to a stationary Markov chain is automatically a $\gamma$-extension
\[
(\qH, V, \fh) = (\qG, V^\qG, \fg) \oplus_\gamma (\qK, V^\qK, \fk)
\]
and hence a whole short exact sequence
\[
0 \rightarrow (\qG, V^\qG, \fg) 
\stackrel{\eins_{\fg}}{\rightarrow} 
(\qG, V^\qG, \fg) \oplus_\gamma (\qK, V^\qK, \fk)
\stackrel{P_\fk}{\rightarrow} (\qK, V^\qK, \fk) \rightarrow 0\;.
\]
can be produced from the stationary Markov chain. We have
$\fg = \Cset \Omega_\phi$ and $\fk = \overline{\qA \Omega_\phi} \ominus \Cset \Omega_\phi$. This may potentially also be useful for the study of stationary states.
\\

{\bf Example:} Consider a (classical) Markov chain on a set with $3$ elements and a transition matrix
\[
T = 
\left(
\begin{array}{ccc}
\vspace{0.1cm}
\frac{1}{2} & \frac{1}{2} & 0 \\
\vspace{0.1cm}
\frac{1}{2} & 0           & \frac{1}{2} \\
\vspace{0.1cm}
0           & \frac{1}{2} & \frac{1}{2} \\
\end{array}
\right).
\]
The same example is also used in \cite{KM00} and \cite{GKL06} which allows further comparisons. To put it into the scheme introduced above we can implement it by a $*$-homomorphism
\begin{eqnarray*}
j \colon\quad \Cset^3 & \rightarrow & \Cset^3 \otimes \Cset^2 \\
           a & \mapsto &     \tilde{A}_1(a) \otimes \epsilon_1 + \tilde{A}_2(a) \otimes \epsilon_2\,.
\end{eqnarray*}
where $\Cset^3$ and $\Cset^2$ are considered as commutative algebras, with the canonical bases
$\{ \delta_1, \delta_2, \delta_3 \}$ and $\{ \epsilon_1, \epsilon_2 \}$, and
\[
\tilde{A}_1 = 				
\left(
\begin{array}{ccc}
0 & 1 & 0 \\
0 & 0 & 1 \\
0 & 0 & 1 \\
\end{array}
\right),
\quad\quad
\tilde{A}_2 = 				
\left(
\begin{array}{ccc}
1 & 0 & 0 \\
1 & 0 & 0 \\
0 & 1 & 0 \\
\end{array}
\right),
\]
together with a state $\psi$ on $\Cset^2$ induced by the probability measure $(\frac{1}{2},\frac{1}{2})$
on an underlying set of two elements. In fact, it is easy to check that
\[
T(a) = (\id \otimes \psi) \, j(a)\,.
\]
The probability measure $(\frac{1}{3},\frac{1}{3},\frac{1}{3})$ is invariant for $T$, so it induces a stationary state $\phi$ on $\Cset^3$. The cyclic vectors are $\Omega_\phi = \frac{1}{\sqrt{3}} (1.1.1)^t$and $\Omega_\psi = \frac{1}{\sqrt{2}} (1.1)^t$. We can identify the induced inner product on $\Cset^3$
and $\Cset^2$ with the canonical one, hence we put $\fh := \Cset^3$ and $\qP := \Cset^2$. The associated isometry is
\begin{eqnarray*}
v_1 \colon\quad \Cset^3 & \rightarrow & \Cset^3 \otimes \Cset^2 \\
           \xi\; & \mapsto &     A_1 \xi \otimes \epsilon_1 + A_2 \xi \otimes \epsilon_2
\end{eqnarray*}
with $A_1 = \frac{1}{\sqrt{2}}\, \tilde{A}_1$ and $A_2 = \frac{1}{\sqrt{2}}\, \tilde{A}_2$.
We can form the dual extended transition operator $Z'$ as follows:
\begin{eqnarray*}
Z' \colon\quad \qB(\Cset^3) &\rightarrow& \qB(\Cset^3) \\
    x           &\mapsto& \sum^2_{k=1} A^*_k x A_k
\end{eqnarray*}
which is unital completely positive and maps diagonal matrices to diagonal matrices in such a way that on the commutative subalgebra of diagonal matrices it reproduces the transition operator $T$. 
(As discussed above there is in general a time reversal involved here but in this specific example we get the same transition matrix.) Note also that the vector state given by $\Omega_\phi$ is invariant for $Z'$. 

As worked out in Section 2 we can define a (unital discrete weak Markov) process by a row unitary $V =(V_1, V_2)$, determined by $V_k |_\fh: \fh \rightarrow \fh \oplus \qE$, $\,k=1,2$, where $\qE$ is a $3$-dimensional Hilbert space for which we also fix an orthonormal basis, as follows:
\begin{eqnarray*}
V_1 |_\fh := 
\left(
\begin{array}{cc}
A^*_1 \\
B^*_1 \\
\end{array}
\right):
\fh \rightarrow \fh \oplus \qE \\
V_2 |_\fh := 
\left(
\begin{array}{cc}
A^*_2 \\
B^*_2 \\
\end{array}
\right):
\fh \rightarrow \fh \oplus \qE \\ 
\end{eqnarray*}
with $A_1, A_2$ as above and
\[
B_1 = \frac{1}{\sqrt{2}}				
\left(
\begin{array}{ccc}
1 & 0 & 0 \\
0 & 1 & 0 \\
0 & -1 & 0 \\
\end{array}
\right),
\quad\quad
B_2 = \frac{1}{\sqrt{2}}				
\left(
\begin{array}{ccc}
0 & 0 & 1 \\
0 & 0 & -1 \\
-1 & 0 & 0 \\
\end{array}
\right).
\]
It is easily checked that indeed $v_1 = V^* |_\fh$. So this is the weak Markov process dual to the original Markov chain. 

We can use $\qE$ as an input space.
Suppose that, as in Proposition \ref{prop:system}, we know the decomposition of the initial state vector at time $0$\,:
\[
\tilde{\xi} := \xi \oplus \bigoplus_{\alpha \in F^+_d} u(\alpha) \in \fh \oplus \bigoplus_{\alpha \in F^+_d} \qE = \qH\,,
\]
then, as shown in Proposition \ref{prop:system}, with the matrices specified above we can recursively compute the conditional state vectors $x(\alpha)$:
\[
x(0) = \xi, \quad x(\alpha k) = A_k \,x(\alpha) + B_k \,u(\alpha) \quad (k=1,2),
\]
conditioned on a measurement protocol $\alpha = (\alpha_1, \ldots, \alpha_n)$ which is obtained from measurements
$Y_1=\alpha_1, \ldots, Y_n = \alpha_n$ of observables 
$Y_m = V^{(m)} \; \eins_\qH \! \otimes \bigotimes^{m-1}_1 \!
\eins_\qP \otimes Y \; V^{(m)*},\; Y \epsilon_k = k\,\epsilon_k$, compare Propositions
\ref{prop:operational} and \ref{prop:system}. This is a version of quantum filtering. 

As discussed above we also have a subprocess based on $\fg = \Cset \Omega_\phi \subset \fh$ which gives us an interesting wandering subspace $\qE^{\fg}$ to use as an output space. With a short computation we find inside the $6$-dimensional space $\fh \oplus \qE$
\begin{eqnarray*}
\Omega_\phi &=& \frac{1}{\sqrt{3}} \big[ (1,1,1) \oplus (0,0,0) \big]^t, \\
V_1 \Omega_\phi &=& \frac{1}{\sqrt{6}} \big[ (0,1,2) \oplus (1,0,0) \big]^t, \\
V_2 \Omega_\phi &=& \frac{1}{\sqrt{6}} \big[ (2,1,0) \oplus (-1,0,0) \big]^t, \\
\end{eqnarray*}
so the $1$-dimensional space $\qE^{\fg}$ is spanned by the unit vector $\frac{1}{\sqrt{3}} \big[ (-1,0,1) \oplus (1,0,0) \big]^t$ and (using it as a basis for $\qE^{\fg}$)
\begin{eqnarray*}
C = P_{\qE^{\fg}} |_\fh &=& \frac{1}{\sqrt{3}} (-1,0,1)\,, \\
D = P_{\qE^{\fg}} |_\qE &=& \frac{1}{\sqrt{3}} (1,0,0)\,. 
\end{eqnarray*}
The dual extended transition operator $Z'$ is ergodic, i.e., its fixed point set is equal to $\Cset \eins_\fh$. This can be checked directly or the result can be taken from \cite{GKL06}. It follows that the process $(\qH, V, \fh)$ is observable by the subprocess $(\qG, V^\qG, \fg)$,
indeed for $\fg = \Cset \Omega_\phi$ the criterion 
$\lim_{n \to \infty} Z^n (P_\fg) = \eins_\fh$ for observability in
Theorem \ref{thm:observable}(5) is equivalent to ergodicity of $Z$, see
\cite{GKL06}, Section 3, or \cite{Go04a}, A.5.2.

The rest of our argument works not only in the example but whenever we have observability by a subprocess $(\qG, V^\qG, \fg)$ with $1$-dimensional $\fg = \Cset \Omega_\phi$ coming from a normal invariant state as in Proposition \ref{prop:state}.
Suppose now that actually we don't know the initial state vector $\tilde{\xi} \in \qH$. Because $\qG = \qH$ by Theorem \ref{thm:observable}(3a)
it is clear that $\tilde{\xi}$ can also be written in the form
\[
\tilde{\xi} = c \Omega_\phi \oplus \bigoplus_{\alpha \in F^+_d} y(\alpha) \in \fg \oplus \bigoplus_{\alpha \in F^+_d} \qE^{\fg} = \qG
\]
(with a complex number $c$), so it can be fully investigated within the $\qG$-process. As discussed at the end of Section 6, to determine $\tilde{\xi}$ from observations is a quantum tomography problem. It is simplified here because $\fg$ is $1$-dimensional, so we don't need observables of the form $X_n$ but we only need observables of the form $Y_n,\; n \in \Nset_0$, compare
Proposition \ref{prop:operational}. This is a remarkable achievement 
because it means in particular that we can determine the original state
$x(0)=\xi$ of the system described by $\qB(\fh)$ from observables which for example in the quantum optics settings mentioned in \cite{BHJ09} may be interpreted as describing the field surrounding the system. In fact, if in some way 
we have succeeded to determine the decomposition of $\tilde{\xi}$ in the $G$-process, i.e., the $y(\alpha)$ for all $\alpha \in F^+_d$,
then we can recover $\xi$ and the $u(\alpha),\, \alpha \in F^+_d$,
with the usual input-output formalism of control theory, that is by solving the noncommutative Fornasini-Marchesini system for $\xi$ and the $u(\alpha),\, \alpha \in F^+_d$. This is possible precisely because
the observability operator $\qO_{C,A}$ is injective, as ensured by 
observability, compare \cite{BBF11}.

We end with some remarks indicating a connection of these results to scattering theory. We can make this precise by looking at a scattering theory for noncommutative Markov chains first introduced in 
\cite{KM00}, with many further developments documented in 
\cite{Go04a,Go04b,GKL06,GHK}. We verify that observability by a subprocess based on
$\fg = \Cset \Omega_\phi$ for a dual weak process, as discussed above, is equivalent
to asymptotic completeness of the scattering theory for the noncommutative Markov chain given by the $*$-homomorphisms
$(j_n)_{n\in\Nset}$ we started from. See the precise statement in Proposition \ref{prop:complete} below. 

Let us start by investigating further what observability means in this case. We can define the associated isometry $v_n$ for the $n$-th noncommutative random variable $j_n$ which, because it arises from the same  iterative procedure, can be expressed by the iteration $V^{(n)}$ of $V$. With $a \in \qA$
and $\xi := a \Omega_\phi \in \fh$ we have
\[
j_n(a)\, \Omega_\phi \otimes \bigotimes^n_1 \Omega_\psi
= v_n \xi = V^{(n)*} \xi
\]
and from that
\begin{eqnarray*}
P_{\Omega_\phi \otimes \bigotimes^n_1\! \qP}\;
j_n(a) \,\Omega_\phi \otimes \bigotimes^n_1 \Omega_\psi
&=& P_{\fg \otimes \bigotimes^n_1\! \qP}\; v_n \xi \\
= V^{(n)*} V^{(n)} P_{\fg \otimes \bigotimes^n_1\! \qP} 
V^{(n)*} \xi
&=& V^{(n)*} P_{V^{(n)}\,\fg \otimes \bigotimes^n_1\! \qP}\; \xi
\end{eqnarray*}
which yields the norm equality
\[
\| P_{\fg \otimes \bigotimes^n_1\! \qP}\; v_n \xi \|
= \| P_{V^{(n)}\,\fg \otimes \bigotimes^n_1\! \qP}\; \xi \|\,.
\]
Because $P_{V^{(n)}\,\fg \otimes \bigotimes^n_1\! \qP}$ increases
to the projection $P_\qG$ (which is the limit for $n \to \infty$ in the strong operator topology) it follows that the property $\qG = \qH$, equivalent to observability by Theorem \ref{thm:observable}(3a),
is also equivalent to
\[
\| P_{\fg \otimes \bigotimes^n_1\! \qP}\; v_n \xi \|
\to \| \xi \| \quad \text{for}\; n \to \infty
\]
for all $\xi \in \fh$. If the GNS-representation is faithful then we can interpret the Hilbert space norm as a norm $\|\cdot \|_2$ on the $C^*$-algebra and we can write
\[
\| E_{\eins_\qA \otimes \bigotimes^n_1\! \qC}\;j_n(a) \|_2
\to \|a\|_2 \quad \text{for}\; n \to \infty
\]
for all $a \in \qA$, where $E_{\eins_\qA \otimes \bigotimes^n_1\! \qC}$ denotes the conditional expectation
obtained by evaluating the state $\phi$ on $\qA$.  

However the latter condition is well known to be equivalent to the asymptotic completeness of the stationary Markov chain (with a faithful stationary state) in the scattering theory context introduced by K\"ummerer and Maassen in \cite{KM00}. As mentioned earlier here we followed a variant developed in \cite{GKL06}. The equivalence of the convergence
in the norm $\|\cdot\|_2$ above with the property of asymptotic completeness as defined in the scattering theory context is stated in \cite{KM00}, 3.3 or \cite{Go04a}, 2.6.4 or \cite{GKL06}, 1.5. Let us summarize the result of our arguments as follows:

\begin{Proposition} \normalfont \label{prop:complete}
The short exact sequence produced by a stationary Markov chain (with a faithful stationary state) is observable in the sense of Definition \ref{def:ac} if and only if the stationary Markov chain is asymptotically complete in the scattering theory meaning of \cite{KM00} or \cite{GKL06}. A necessary and sufficient criterion is given by
\[
\lim_{n \to \infty} (Z')^n (P_\fg) = \eins_\fh \quad \text{(in the strong operator topology)}
\]
\end{Proposition}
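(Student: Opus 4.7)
The plan is to assemble the equivalences already sketched in the discussion preceding the statement into a clean chain of implications. The key observation is that since $V$ is obtained as the minimal isometric dilation of the row coisometry $v_1^*$, it is automatically a row unitary; hence the associated weak process is unital and its transition operator $Z$ satisfies $Z(\eins_\fh)=\eins_\fh$. This places us precisely in the setting where assertion (5) of Theorem \ref{thm:observable} is available, and together with the equivalences (1a)--(4b) there, this immediately gives that asymptotic completeness in the sense of Definition \ref{def:ac} is equivalent to $\lim_{n\to\infty} Z^n(P_\fg) = \eins_\fh$ in the strong operator topology. This establishes the stated criterion and reduces the rest of the work to connecting it to the scattering theory notion.

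For the scattering theory bridge I would use the identity
\[
\|P_{\fg \otimes \bigotimes^n_1 \qP}\, v_n \xi\| = \|P_{V^{(n)}\,\fg \otimes \bigotimes^n_1 \qP}\, \xi\|
\]
derived in the paragraph above the statement from $v_n = V^{(n)*}|_\fh$. Because the projections $P_{V^{(n)}\,\fg \otimes \bigotimes^n_1 \qP}$ increase in SOT to $P_\qG$, the condition $\qG = \qH$ (which by Theorem \ref{thm:observable}(3a) characterizes asymptotic completeness) translates exactly into $\|P_{\fg \otimes \bigotimes^n_1 \qP}\, v_n \xi\| \to \|\xi\|$ for every $\xi \in \fh$. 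Through the GNS-identifications $\fh \simeq \overline{\qA\Omega_\phi}$ and $\qP \simeq \overline{\qC\Omega_\psi}$, the projection $P_{\fg \otimes \bigotimes^n_1 \qP}$ corresponds to the conditional expectation $E_{\eins_\qA \otimes \bigotimes^n_1 \qC}$ obtained by evaluating $\phi$ on the leading tensor factor, so the condition takes the form
\[
\|E_{\eins_\qA \otimes \bigotimes^n_1 \qC}\, j_n(a)\|_2 \to \|a\|_2 \qquad (a \in \qA).
\]
Invoking Lemma 1.5 of \cite{GKL06} then identifies this norm convergence with asymptotic completeness in the scattering theory meaning of \cite{KM00, GKL06}, closing the loop.

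The main subtlety I anticipate is the case where the GNS-representation of $\qA$ is not faithful: the $\|\cdot\|_2$-norm above is then only a seminorm on $\qA$ rather than coming from a $C^*$-norm, so some care is needed in citing \cite{GKL06}. The issue is cosmetic, however, since passing to the quotient of $\qA$ by the kernel of the GNS-representation preserves both notions of asymptotic completeness and leaves the associated weak process $(\qH,V,\fh)$ unchanged, and on this quotient Lemma 1.5 of \cite{GKL06} applies directly.
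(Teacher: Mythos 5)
Your proposal is correct and follows essentially the same route as the paper: unitality of the dilated process gives criterion (5) of Theorem \ref{thm:observable}, the norm identity for $v_n$ together with the increasing projections $P_{V^{(n)}\fg\otimes\bigotimes^n_1\qP}\nearrow P_\qG$ translates $\qG=\qH$ into the $\|\cdot\|_2$-convergence condition, and Lemma 1.5 of \cite{GKL06} closes the loop. Your remark on the non-faithful GNS case is a sensible small addition; the paper simply restricts to the faithful case at that point.
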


In fact, the last statement is nothing but
criterion (5) for observability from Theorem \ref{thm:observable} applied to this special situation. This reproduces a criterion for asymptotic completeness in terms of the dual extended transition operator $Z'$, see \cite{Go04a}, 2.7.4 or \cite{GKL06}, Section 4 (in particular Theorem 4.3 there).
As mentioned earlier, because in this case $\fg = \Cset \Omega_\phi$ is one-dimensional the criterion is also equivalent to the ergodicity of $Z'$, i.e., the fixed point set of $Z'$ being equal to $\Cset \eins_\fh$, see \cite{GKL06}, Section 3, for more details.  

Let us finish with a sketchy discussion of related work towards scattering theory, with the purpose of providing some context and directing the reader to the relevant literature. 
The definition of asymptotic completeness for stationary Markov chains (with a faithful invariant state) in \cite{KM00,Go04a,GKL06} is given in terms of intertwiners (M{\o}ller operators) between a free and a perturbed dynamics.
This requires a two-sided process (i.e., with time variable in $\Zset$ for discrete time) as in \cite{KM00,Go04a}
or the construction of a two-sided extension as in \cite{GKL06}. In the
version of \cite{Go04a}, 2.6.4, we consider the $C^*$-algebra
\[
\qA \otimes \bigotimes_{0\not=n \in \Zset} \qC
\] 
on which we have an automorphism $\alpha$ given by the time evolution of the Markov chain and an automorphism $\sigma$ which is nothing but the right tensor shift on the $\qC$'s, acting identically on $\qA$. Then asymptotic completeness is the existence of the
M{\o}ller operator $\lim_{n \to \infty} \sigma^{-n} \alpha^n$ (pointwise weak${^*}$-limit) as an
isomorphism between the weak closure of $\qA \otimes \bigotimes_{0\not=n \in \Zset} \qC$ and the weak closure of
$(\eins \otimes) \bigotimes_{0\not=n \in \Zset} \qC$. It is necessary here to go to weak closures (with respect to a faithful invariant state) and to work in the category of von Neumann algebras. Note that wave operators, as used for example in \cite{He72}, are the same or inverses of M{\o}ller operators, depending on context.
Note also that to get a full scattering theory and to define a scattering operator as a composition of forward and backward wave operators we have to do the same construction also for the time reversed dynamics.

It is a natural question how the corresponding scattering theory on the level of weak processes looks like, i.e., in the setting we used in this paper. This is closely related to the approach in \cite{BV05} which is a study of row isometries in the spirit of Lax-Phillips scattering theory (which also inspired the approach in \cite{KM00} towards a scattering theory for stationary Markov chains). 
Further results already exist in the situation of a dual weak process for which, by Proposition \ref{prop:complete}, observability corresponds to the asymptotic completeness in the von Neumann algebra setting discussed above. In this case a version of the M{\o}ller operator acting between weak processes is worked out and discussed
in \cite{Go04a}, 2.5.6-2.5.7. Note that in this case infinite tensor products inherited from the underlying algebras can be used also for the Hilbert spaces. The paper \cite{Go09} builds a bridge between this work
and \cite{BV05}. 

But in fact we have seen a version of such a M{\o}ller operator on the level of weak processes also in this paper, namely the change of basis described in the end of Section 6 as the first interpretation offered for observability.
It relates the weak filtration of $(\qH,V,\fh)$ to the weak filtration of $(\qG,V^\qG,\fg)$ and is given by 
the identity on the one-dimensional $\fg = \Cset \Omega_\phi$ plus the observability map
from $\fk$ to $\qY_+$ plus the multi-analytic operator associated to the transfer function (on $\qU_+$).
If we take into account the identification of $\qG$ with the inductive
limit of the sequence $\big(\fg \otimes \bigotimes^n_1 \! \qP \big)$
mentioned in Section 2
and note that because $\fg = \Cset \Omega_\phi$ is one-dimensional we can identify it further with an infinite tensor product $\bigotimes^\infty_1 \! \qP$ then we arrive essentially at the version of the scattering theory worked out in \cite{Go04a}, Chapter 2.
 
But the setting of this paper is more general. For example we could also consider stationary Markov chains which are not originally constructed by tensor products of algebras and nevertheless associate weak processes via GNS-construction and apply Proposition \ref{prop:state} to find subprocesses (with $\fg = \Cset \Omega_\phi$). Or we can study higher-dimensional co-invariant subspaces $\fg$. The systematic use of weak processes adds conceptual clarity to such investigations.

Let us finally mention yet another point of view that can be adopted here which starts from the remark at the end of Section 4 that
$\gamma$-extensions can also be considered as dilations of contractive liftings. This motivated research on the corresponding transfer functions, i.e., the multi-analytic parts of the M{\o}ller operators from scattering theory, under the heading of characteristic functions for contractive liftings. The case with a one-dimensional co-invariant subspace for the subprocess has been investigated in \cite{DG07} and in fact the example of a characteristic function explicitly computed in Section 7 of \cite{DG07} comes exactly from the dual weak process of the Markov chain on a set with $3$ elements which we also used as an illustration earlier in this section (to compare note that our matrices 
$A_1. A_2$ correspond to 
$A^*_1, A^*_2$ in the notation of \cite{DG07}). The general case of characteristic functions for contractive liftings is defined and studied in \cite{DG11} and there is significant progress on this topic in the recent \cite{DGH}. 

Acknowledgements:
This work has been partly funded by the EPSRC-Research Grant 
EP/G039275/1. We thank the referees for detailed and constructive remarks leading to substantial improvements of older versions.

\end{document}